%% 
%% Copyright 2007-2020 Elsevier Ltd
%% 
%% This file is part of the 'Elsarticle Bundle'.
%% ---------------------------------------------
%% 
%% It may be distributed under the conditions of the LaTeX Project Public
%% License, either version 1.2 of this license or (at your option) any
%% later version.  The latest version of this license is in
%%    http://www.latex-project.org/lppl.txt
%% and version 1.2 or later is part of all distributions of LaTeX
%% version 1999/12/01 or later.
%% 
%% The list of all files belonging to the 'Elsarticle Bundle' is
%% given in the file `manifest.txt'.
%% 

%% Template article for Elsevier's document class `elsarticle'
%% with numbered style bibliographic references
%% SP 2008/03/01
%%
%% 
%%
%% $Id: elsarticle-template-num.tex 190 2020-11-23 11:12:32Z rishi $
%%
%%
% \documentclass[preprint,12pt]{elsarticle}
\documentclass[preprint]{elsarticle}

%% Use the option review to obtain double line spacing
%% \documentclass[authoryear,preprint,review,12pt]{elsarticle}

%% Use the options 1p,twocolumn; 3p; 3p,twocolumn; 5p; or 5p,twocolumn
%% for a journal layout:
%% \documentclass[final,1p,times]{elsarticle}
%% \documentclass[final,1p,times,twocolumn]{elsarticle}
%% \documentclass[final,3p,times]{elsarticle}
%% \documentclass[final,3p,times,twocolumn]{elsarticle}
%% \documentclass[final,5p,times]{elsarticle}
%% \documentclass[final,5p,times,twocolumn]{elsarticle}

%% For including figures, graphicx.sty has been loaded in
%% elsarticle.cls. If you prefer to use the old commands
%% please give \usepackage{epsfig}

%% The amssymb package provides various useful mathematical symbols
% \usepackage[ansinew]{inputenc}
% \usepackage[encapsulated]{CJK}

\usepackage{amssymb}
\usepackage{hyperref}
\hypersetup{
    colorlinks=true,
    linkcolor=blue,
    filecolor=magenta,      
    urlcolor=cyan,
    % pdftitle={Overleaf Example},
    pdfpagemode=FullScreen,
    }
\urlstyle{same}
\usepackage{amsthm}
\usepackage{amsmath}
\usepackage{amsfonts}
\usepackage{amssymb}
\usepackage[version=4]{mhchem}
\usepackage{stmaryrd}
\usepackage{graphicx}
\usepackage[export]{adjustbox}
\graphicspath{ {./images/} }
\usepackage{bbold}
\usepackage{mathrsfs}
\usepackage[normalem]{ulem}
\newcommand{\stkout}[1]

%{\ifmmode\text{\sout{\ensuremath{#1}}}\else\sout{#1}\fi}

\usepackage{color}
\ifx\red\undefined
\newcommand{\red}{\color{red}}

\newcommand{\green}{\color{green}}

\fi
\newcommand{\void}[1]{}
%% The amsthm package provides extended theorem environments
%% \usepackage{amsthm}

%% The lineno packages adds line numbers. Start line numbering with
%% \begin{linenumbers}, end it with \end{linenumbers}. Or switch it on
%% for the whole article with \linenumbers.
%% \usepackage{lineno}

% \journal{Nonlinear Analysis: Real World Applications}

\begin{document}
% \begin{CJK}{UTF8}{gkai}

\begin{frontmatter}

%% Title, authors and addresses

%% use the tnoteref command within \title for footnotes;
%% use the tnotetext command for theassociated footnote;
%% use the fnref command within \author or \address for footnotes;
%% use the fntext command for theassociated footnote;
%% use the corref command within \author for corresponding author footnotes;
%% use the cortext command for theassociated footnote;
%% use the ead command for the email address,
%% and the form \ead[url] for the home page:
%% \title{Title\tnoteref{label1}}
%% \tnotetext[label1]{}
%% \author{Name\corref{cor1}\fnref{label2}}
%% \ead{email address}
%% \ead[url]{home page}
%% \fntext[label2]{}
%% \cortext[cor1]{}
%% \affiliation{organization={},
%%             addressline={},
%%             city={},
%%             postcode={},
%%             state={},
%%             country={}}
%% \fntext[label3]{}

\title{Bifurcation analysis of a free boundary model of vascular tumor growth with a necrotic core and chemotaxis}

\newtheorem{theorem}{Theorem}[section]
\newtheorem{corollary}{Corollary}[theorem]
\newtheorem{lemma}[theorem]{Lemma}
\newtheorem*{verification}{Verification}
\newtheorem*{remark}{Remark}
%% use optional labels to link authors explicitly to addresses:
%% \author[label1,label2]{}
%% \affiliation[label1]{organization={},
%%             addressline={},
%%             city={},
%%             postcode={},
%%             state={},
%%             country={}}
%%
%% \affiliation[label2]{organization={},
%%             addressline={},
%%             city={},
%%             postcode={},
%%             state={},
%%             country={}}

\author[inst1]{Min-Jhe Lu}
\affiliation[inst1]{organization={Department of Mathematics, University of California at Irvine},
            city={Irvine},
            postcode={92617}, 
            state={California},
            country={United States}}

% \author[inst3]{Wenrui Hao}           
\author[inst2]{Wenrui Hao\corref{cor1}}
\ead{wxh64@psu.edu}
\cortext[cor1]{Corresponding author}
\affiliation[inst2]{organization={Department of Mathematics, Pennsylvania State University},%Department and Organization
            % addressline={Address Two}, 
            city={University Park},
            postcode={16802}, 
            state={Pennsylvania},
            country={United States}}
 \author[inst3]{Bei Hu}

 \affiliation[inst3]{organization={Department of Applied and Computational Mathematics and Statistics, University of Notre Dame},%Department and Organization
            % addressline={Address Two}, 
            city={Notre Dame},
            postcode={46556}, 
            state={Indiana},
            country={United States}}
            
% \author[inst1]{Shuwang Li\corref{cor1}}
\author[inst4]{Shuwang Li}
% \ead{sli15@iit.edu}

\affiliation[inst4]{organization={Department of Applied Mathematics, Illinois Institute of Technology},%Department and Organization
            % addressline={Rettaliata Engineering Center Room 230, 10 W. 32nd St.}, 
            city={Chicago},
            postcode={60616}, 
            state={Illinois},
            country={United States}}

\begin{abstract}

A considerable number of research works has been devoted to the study of tumor models. Several biophysical factors, such as cell proliferation, apoptosis, chemotaxis, angiogenesis and necrosis, have been discovered to have an impact on the complicated biological system of tumors. An indicator of the aggressiveness of tumor development is the instability of the shape of the tumor boundary. Complex patterns of tumor morphology have been explored in \cite{lu2022nonlinear}. In this paper, we continue to carry out a bifurcation analysis on such a vascular tumor model with a controlled necrotic core and chemotaxis. This bifurcation analysis, to the parameter of cell proliferation, is built on the explicit formulas of radially symmetric steady-state solutions. By perturbing the tumor free boundary and establishing rigorous estimates of the free boundary system, %applying the Hanzawa transformation, 
we prove the existence of the bifurcation branches with Crandall-Rabinowitz theorem. 
The parameter of chemotaxis is found to influence the monotonicity of the bifurcation point as the mode $l$ increases both theoretically and numerically.

\end{abstract}

%%Graphical abstract
% \begin{graphicalabstract}
% \includegraphics{}
% \end{graphicalabstract}

%%Research highlights
% \begin{highlights}
% \item Research highlight 1
% \item Research highlight 2
% \end{highlights}

\begin{keyword}
%% keywords here, in the form: keyword \sep keyword
Bifurcation\sep Free boundary problem\sep Chemotaxis \sep Vascular tumor\sep Necrotic core.
%% PACS codes here, in the form: \PACS code \sep code
% \PACS 0000 \sep 1111
%% MSC codes here, in the form: \MSC code \sep code
%% or \MSC[2008] code \sep code (2000 is the default)
\MSC[2020] 35R35\sep  35K57\sep  35B35
\end{keyword}

\end{frontmatter}

%% \linenumbers

%% main text
\section{Introduction}

Mathematical models by using free boundary problems have been developed to describe the tumor growth \cite{friedman2003hyperbolic,friedman2007mathematical,friedman2004free,friedman2004hierarchy,hao2016serum}. For the non-vascular, non-necrotic tumor without chemotaxis, early bifurcation result includes \cite{friedman2001existence}.
Tumor necrosis, associated with aggressiveness of the tumor growth and poor prognosis, is important for clinical intervention and  potential treatment targets related to tumor necrosis. There are several free boundary problems developed along this direction: two free boundaries are introduced to model the movement of both necrosis and tumor in \cite{hao2012bifurcation,hao2012continuation}; the existence of radially symmetric steady-state solutions for a free boundary problem with tumor necrosis has been established in \cite{cui2001analysis}; bifurcation analysis of a free boundary problem
modeling the tumor necrosis with a Robin boundary has been studied in \cite{song2021stationary}. Moreover, various nonlinear boundary conditions for free boundary models have been studied in 
\cite{zheng2019analysis,zhou2015stability}. A free boundary model of tumor growth with inhibitors is considered in \cite{wang2014bifurcation}
 and the time-delay impact on tumor growth is studied in \cite{zhao2020impact}.

Although there are many mathematical models based on free boundary problems that are used to describe the tumor necrosis, most of them focus on the dynamics of the necrotic core by treating it as a free boundary. In this paper, we consider a model studied in \cite{lu2022nonlinear} which addresses a different question: how tumor environment (\textit{e.g.,} pressure level, nutrient concentration) would be prescribed by the control of the necrotic core? 

% \section{Mathematical model}
% \label{sec:model}
% \paragraph{\textbf{Computation domain}}

As illustrated in Fig.\ref{fig:domain}, let $\Omega_0$ be the necrotic core, $\Omega(t)$ be the tumor tissue, $\Gamma_0$ be the controlled necrotic boundary and $\Gamma(t)$ be the tumor boundary. 

\begin{figure}
 \centering
  \includegraphics[width=0.8\textwidth]{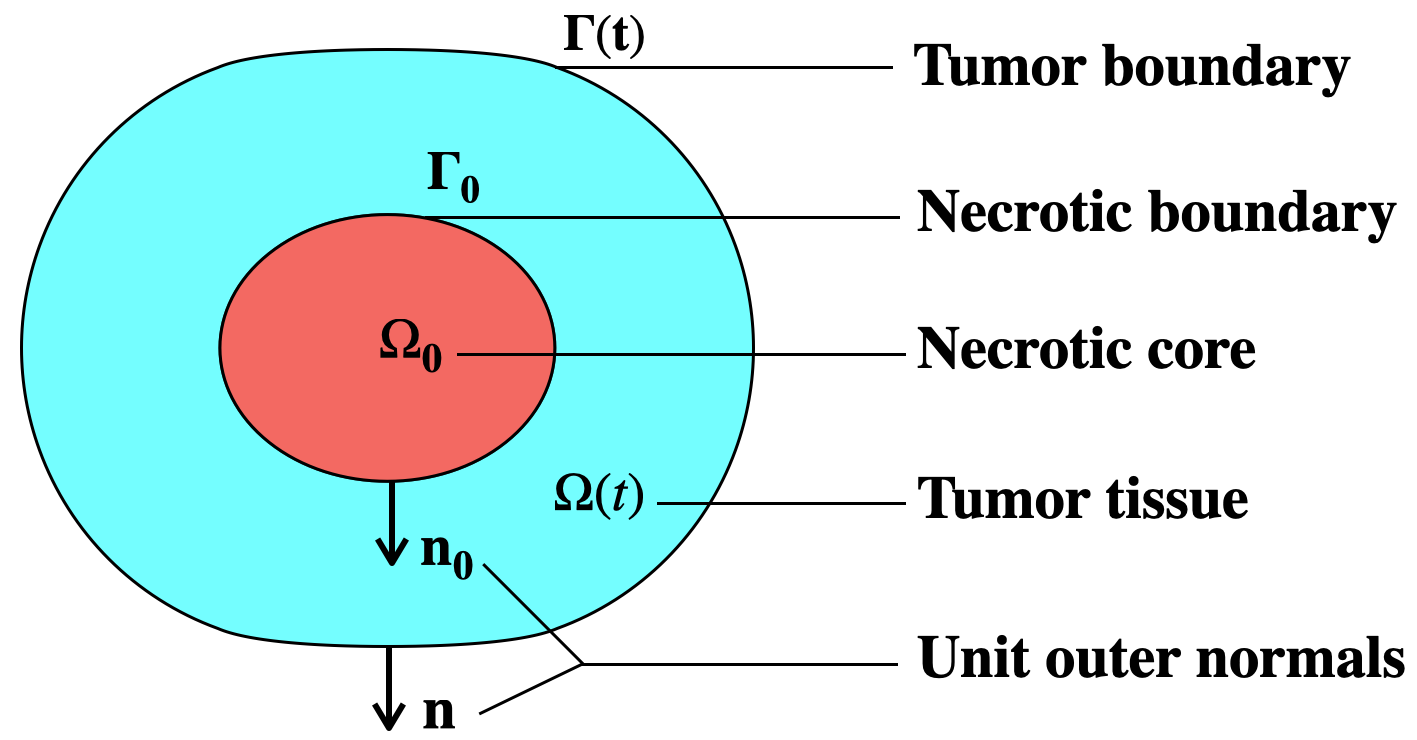}
  \caption{Illustration of the computation domain.}
  \label{fig:domain}
\end{figure}

\paragraph{\textbf{Nutrient field}}
The nutrient field in $\Omega(t)$ is governed by:
\begin{equation}\label{nut rde}
\sigma_t=D\Delta \sigma-\lambda\sigma\quad \text{in }\Omega(t),
\end{equation}
where $D,\lambda$ are the diffusion constant and %uptake
consumption rate, respectively.
We assume Dirichlet boundary condition on the necrotic boundary: 
\begin{equation}
\sigma=\sigma^N\quad\text{on }\Gamma_0,
\end{equation}
where $\sigma^N$ is the constant nutrient level at the necrotic boundary.
We assume Robin boundary condition on the tumor boundary:
\begin{equation}
\frac{\partial\sigma}{\partial\mathbf{n}}+\beta(\sigma-\sigma^\infty)=0\quad\text{on }\Gamma(t),
\end{equation}
where $\mathbf{n}$ is the outward normal, $\bar{\sigma}$ is the constant nutrient level outside the tumor, $\beta$ is the rate of nutrient supply to the tumor, which reflects the extent of angiogenesis. 
% We remark that we can also use the Robin boundary condition to replace the Dirichlet boundary condition on the necrotic boundary, since as $\beta$ is large enough, the Robin boundary condition will converge to the Dirichlet boundary condition, which is used for tumor boundary in \cite{cristini2003nonlinear}.

\paragraph{\textbf{Pressure field}}
To introduce chemotaxis, the directed cell migration up gradients of nutrients, we use the generalized Darcy's law:
\begin{equation}\label{chemodarcy}
\mathbf{u}=-\mu \nabla p + \chi_{\sigma}\nabla \sigma\quad\text{in } \Omega(t),
\end{equation}
where $\mathbf{u}$ is the tumor cell velocity, $\mu$ is the cell mobility and $\chi_\sigma$ is the chemotaxis coefficient.
The mass conservation:
\begin{equation}\label{massconserv}
\nabla  \cdot \mathbf{u}=\lambda_M\frac{\sigma}{\sigma^\infty}-\lambda_A\quad\text{in }\Omega(t)
\end{equation}
where $\lambda_M$, $\lambda_A$ are the rates of mitosis (cell birth) and apoptosis (cell death), respectively.
% {\blue\bf \sout{Applying Eq. $\eqref{chemodarcy}$ into Eq.  $\eqref{massconserv}$, we have}
% \begin{equation}
% \stkout{-\mu\Delta p=\left(\frac{\lambda_M}{\sigma^\infty}-\chi_\sigma\right)\sigma-\lambda_A\quad\text{in }\Omega(t).}
% \end{equation}}
% {\bf\red The above equation is not right. Where is $\Delta \sigma?$ It should be
% \[
% -\mu\Delta p= \frac{\lambda_M}{\sigma^\infty} \sigma - \chi_\sigma \Delta \sigma -\lambda_A\quad\text{in }\Omega(t).
% \]
% It does not cause a problem for stationary solutions, or for quasi-steady state approximations, since $\Delta \sigma$ is proportional to $\sigma$, but some change needs to be made. A slight modification in (10) and (11) would fix the problem.
% }{\blue\bf Yes. Since we explained and applied quasi-steady state approximations in (12), and combined with (13) and (14) gives the non-dimensionalized version of the above equation in (17), I suggest just deleting this equation here then we don't need to change (10) and (11).}

We assume a static boundary condition on the necrotic boundary:
\begin{equation}\label{fix nec}
0=-\mu \frac{\partial p}{\partial \mathbf{n}_0}+\chi_{\sigma}\frac{\partial \sigma}{\partial \mathbf{n}_{0}}\quad\text{on }\Gamma_0,
\end{equation}
which corresponds to our assumption that the necrotic boundary is fixed.

The Laplace-Young condition is assumed on the tumor boundary:
\begin{equation}\label{eq6}
p=\gamma \left.\kappa\right|_{\Gamma(t)}\quad\text{on }\Gamma(t),
\end{equation}
where $\gamma$ is a constant representing cell-cell adhesion and $\left.\kappa\right|_{\Gamma(t)}$ is the mean curvature of the surface %curve 
$\Gamma(t)$. In this paper we only consider the 2-space dimensional case, so $\Gamma(t)$ is actually a curve.

\paragraph{\textbf{Equation of motion}}
The equation of motion for the interface $\Gamma(t)$ is given by:
\begin{equation}\label{eq8}
V\equiv\mathbf u
\cdot 
\mathbf{n}=-\mu \left.\frac{\partial p}{\partial \mathbf{n}}\right|_{\Gamma(t)} 
+ \chi_\sigma \left.\frac{\partial \sigma}{\partial \mathbf{n}}\right|_{\Gamma(t)}\quad\text{on }\Gamma(t).
\end{equation}

In \cite{lu2022nonlinear}, the computer simulation reveals the instability of tumor free boundary through various patterns caused by several biophysical parameters in this model. More numerical experiments are investigated for Stokes-flow with the elastic membrane in \cite{lu2019nonlinear}  and for Darcy-flow with heterogeneous vasculature in \cite{lu2020complex}. In this paper, we will focus on the cell proliferation rate and carry out the bifurcation analysis to see the effect such as chemotaxis, angiogenesis, and necrosis on the bifurcation points.

This paper is structured as follows. In Section \ref{sec:Non-dimensionalization}, we give the nondimensionalized free boundary model based on nutrient and pressure fields. Next, we compute the explicit solutions of radially symmetric steady-state solutions in Section \ref{Sec:raidial}. Finally, the bifurcation analysis is studied in Section \ref{sect:bifur}.

\section{Non-dimensionalized free boundary model}
\label{sec:Non-dimensionalization}
We introduce the diffusion length $L$, the intrinsic taxis time scale $\lambda_\chi^{-1}$, and the characteristic pressure $p_s$ by:
\begin{equation}\label{eq9}
L=\sqrt{\frac{D}{\lambda}}, \quad
\lambda_\chi=\frac{\overline{\chi_{\sigma}}\sigma^{\infty}}{L^2}, \quad 
p_s=\frac{\lambda_{\chi}L^2}{\mu},
\end{equation}
where $\overline{\chi_\sigma}$ is a characteristic taxis coefficient. The length scale $L$ and the time scale ${\lambda_\chi}^{-1}$ are used to non-dimensionalize the space and time variables by $\mathbf{x}=L\widetilde{\mathbf{x}}$, $t={\lambda_\chi}^{-1}\widetilde{t}$.
Define
\begin{equation}\label{eq10}
\widetilde{\sigma}=\frac{\sigma}{\sigma^{\infty}} , \quad
\underline{\sigma}=\frac{\sigma^N}{\sigma^{\infty}} , \quad
\widetilde{p}=\frac{p}{p_s}, \quad
\widetilde{\chi_{\sigma}}=\frac{\chi_{\sigma}}{\overline{\chi_{\sigma}}}, \quad
\widetilde{\beta}=L\beta, \quad
\widetilde{\mathbf{u}}
=\frac{\mathbf{u}}{\lambda_\chi L}.
\end{equation}
Since taxis occurs more slowly than diffusion (e.g. minutes vs hours), we assume  $\lambda_{\chi} \ll \lambda$, which leads to a quasi-steady reaction-diffusion equation for the nutrient field. We remark that by the term ``taxis'', we mean taxis of tumor cells up to nutrient gradients, as embodied in Eq. \eqref{chemodarcy}. Then Eq. \eqref{nut rde} becomes $\varepsilon\widetilde\sigma_{\widetilde t}=\widetilde\Delta\widetilde\sigma-\widetilde \sigma$, where $\varepsilon=\frac{\lambda_\chi}{\lambda}\approx\frac{T_\text{diffusion}}{T_\text{taxis}}$. For the nutrient diffusion time scale $T_\text{diffusion}$, typically it can be assumed to occur in the scale of minutes, say 1 minute (see p.226 in \cite{friedman2006cancer}). For the taxis time scale $T_\text{taxis}$, we can estimate it by dividing the diameter  of the diffusion-limited tumor spheroid by the speed of migration of tumor cells up chemical gradients.  
For the tumor diameter, as summarized in \cite{grimes2014method}: ``oxygen diffusion limits are typically 100–200 $\mu m$'', and here we take the average 150 $\mu m$. For the speed of tumor migration, as summarized in \cite{roussos2011chemotaxis}: ``Some carcinoma cells with an amoeboid morphology can move at high speeds inside the tumors ($\sim 4 \mu m\text{ min}^{-1} $) ... At the other end of the range of modes of motility, ... mesenchymal migration ... (0.1--1 $\mu m\text{ min}^{-1} $)'', and here we take the average of the two types $\sim 2\mu m \text{ min}^{-1}$. Therefore, the taxis time scale can be estimated as $\frac{150 \mu m}{2\mu m \text{ min}^{-1}}=1.25$ hour.
Hence we have 
% $\lambda_{\chi} \ll \lambda$ or 
$\displaystyle\varepsilon\approx\frac{T_\text{diffusion}}{T_\text{taxis}}\approx\frac{1\  minute}{1\ hour} \ll 1$.
The dimensionless system is thus given by:
\paragraph{\textbf{Nutrient field}}
We have governing equations for the nutrient field:

\begin{equation}\label{nutrientfield}
\left\{\begin{aligned}
\widetilde{\Delta} \tilde{\sigma} &=\widetilde{\sigma} & & \text { in } \Omega(t) \\
\tilde{\sigma} &=\underline{\sigma} & & \text { on } \Gamma_{0}, \\
\frac{\widetilde{\partial} \widetilde{\sigma}}{\widetilde{\partial} \widetilde{\mathbf{n}}} &=\widetilde{\beta}(1-\widetilde{\sigma}) && \text { on } \Gamma(t)
\end{aligned}\right.
\end{equation}

where $\widetilde\beta$ (angiogenesis factor) represents the extent of angiogenesis.
\paragraph{\textbf{Pressure field}}
\begin{itemize}
    \item Non-dimensional Chemo-Darcy's law.
    \begin{equation}
\mathbf{\widetilde{u}}=-\widetilde{\nabla} \widetilde{p} + \widetilde{\chi_{\sigma}} \widetilde{\nabla} \widetilde{\sigma}\quad\text{in }\Omega(t),
\end{equation}
where $\widetilde{\chi_\sigma}$ (chemotaxis constants) represents taxis effect.
\item Conservation of tumor mass.
\begin{equation}
\widetilde{\nabla} \cdot \widetilde{\mathbf{u}}=\mathcal{P}\left(\widetilde{\sigma}- \mathcal{A}\right)\quad\text{in }\Omega(t),
\end{equation}
where $\displaystyle \mathcal{P}=\frac{\lambda_M}{\lambda_\chi}$ (proliferation rate) represents the rate of cell mitosis relative to taxis, $\displaystyle \mathcal{A}=\frac{\lambda_A}{\lambda_M}$ (apoptosis rate) represents apoptosis relative to cell mitosis.
\item Boundary conditions.
\begin{align}
\left.\frac{\widetilde\partial \widetilde p}{\widetilde\partial \widetilde{\mathbf{n}}_0}\right|_{\Gamma_0}&=\widetilde{\chi_{\sigma}}\left.\frac{\widetilde\partial\widetilde\sigma}{\widetilde\partial \widetilde{ \mathbf{n}_0}}\right|_{\Gamma_0}&\text{on }&\Gamma_0,\label{}\\
\left.\widetilde p\right|_{\Gamma(t)}&=\widetilde{\mathcal{G}}^{-1}\left.\widetilde\kappa\right|_{\Gamma(t)}&\text{on }&\Gamma(t),\label{}
\end{align}
where $\displaystyle \widetilde{\mathcal{G}}^{-1} 
=\frac{\mu\gamma}{\lambda_\chi L^3}$ represents the relative strength of cell-cell interactions (adhesion).
\end{itemize}
Therefore, we have governing equations for pressure field:
\begin{equation}\label{pressurefield}
\left\{
\begin{array}{ccc}
\begin{aligned}
-\widetilde \Delta \widetilde p&=\mathcal{P}(\widetilde\sigma-\mathcal{A}) -\widetilde\chi_\sigma\widetilde\sigma&\text{in }&\Omega(t),\\
\left.\frac{\widetilde\partial \widetilde p}{\widetilde\partial \widetilde{\mathbf{n}}_0}\right|_{\Gamma_0}&=\widetilde{\chi_{\sigma}}\left.\frac{\widetilde\partial\widetilde\sigma}{\widetilde\partial \widetilde{ \mathbf{n}_0}}\right|_{\Gamma_0}&\text{on }&\Gamma_0,\\
\left.\widetilde{p}\right|_{\Gamma(t)}&=\widetilde{\mathcal{G}}^{-1}\left.\widetilde\kappa\right|_{\Gamma(t)
}&\text{on }&\Gamma(t).
\end{aligned}
\end{array}
\right.
\end{equation}
\paragraph{\textbf{Equation of motion}}
\begin{equation}\label{eqnofmotion}
\widetilde{V}=-\left.\frac{\widetilde{\partial}\widetilde{p}}{\widetilde{\partial}\widetilde{\mathbf{n}}}\right|_{\Gamma(t)}
+\widetilde{\chi_{\sigma}}\left.\frac{\widetilde{\partial}\widetilde{\sigma}}{\widetilde{\partial}\widetilde{\mathbf{n}}}\right|_{\Gamma(t)}\quad\text{on }\Gamma(t).
\end{equation}

For brevity, we remove all ``$\widetilde{\quad}$'' in the following of this paper.
% Equivalently, we have
% \begin{equation}
% \widetilde{V}=-\left.\frac{\widetilde{\partial}\widetilde{p}}{\widetilde{\partial}\widetilde{\mathbf{n}}}\right|_{\Gamma(t)}
% +\widetilde{\chi_{\sigma}}\widetilde\beta(1-\widetilde\sigma)\quad\text{on }\Gamma(t).
% \end{equation}

\section{Explicit formulas of radially symmetric steady-state solutions}\label{Sec:raidial}
We recall some properties of the modified bessel functions $K_{n}(x)$ and $I_{n}(x) .$ These functions form a fundamental solution set of
$$
x^{2} y^{\prime \prime}+x y^{\prime}-\left(x^{2}+n^{2}\right) y=0
$$
Furthermore,
$$
\begin{array}{ll}
I_{n+1}(x)=I_{n-1}(x)-\frac{2 n}{x} I_{n}(x), & K_{n+1}=K_{n-1}(x)+\frac{2 n}{x} K_{n}(x), n \geq 1, \\
I_{n}^{\prime}(x)=\frac{1}{2}\left[I_{n-1}(x)+I_{n+1}(x)\right], & K_{n}^{\prime}(x)=-\frac{1}{2}\left[K_{n-1}(x)+K_{n+1}(x)\right], n \geq 1, \\
I_{n}^{\prime}(x)=I_{n-1}(x)-\frac{n}{x} I_{n}(x), & K_{n}^{\prime}(x)=-K_{n-1}(x)-\frac{n}{x} K_{n}(x), n \geq 1, \\
I_{n}^{\prime}(x)=\frac{n}{x} I_{n}(x)+I_{n+1}(x), & K_{n}^{\prime}(x)=\frac{n}{x} K_{n}(x)-K_{n+1}(x), n \geq 0.
\end{array}
$$
In particular, $\forall n\ge 0$,
$$
\begin{aligned}
I_{n}(x)>0 \quad &\text { and } \quad K_{n}(x)>0,\\
I_{n}^{\prime}(x)>0 \quad &\text { and } \quad K_{n}^{\prime}(x)<0.
\end{aligned}
$$
In this paper, we will only consider the two space dimensional case. The three-dimensional case can be considered in a similar manner, except that much more computing power is needed.

We now introduce the two-dimensional polar coordinate with the radial coordinate $r$ and the angular coordinate $\theta$. Then, $\sigma$ and $p$ are functions of $(r, \theta, t)$ and the free boundary is represented as $r-R(\theta, t)=0$. From now on we drop all tildes and overbars for brevity.

Specifically, in the radially symmetric case, since $\frac{\partial \sigma}{\partial \theta}=\frac{\partial p}{\partial \theta}=0$, we have $\sigma(r, t)$ and $p(r, t) .$ Then, the free boundary is $\Gamma(t)=\{r \mid r$ $=R(t)\}$ and the fixed boundary is $\Gamma_{0}=\{r \mid r=R_0\}$. Moreover, the steady-state solutions are denoted as $\sigma(r)$ and $p(r)\ (R_0 \leq r \leq R)$ since $t \rightarrow \infty$.

\paragraph{\textbf{Radially symmetric solution of $\sigma$}} 
First, we compute the radially symmetric solution 
% of (1) 
% by taking $\frac{\partial \sigma}{\partial t}=0$ 
and have
$$
\left\{
\begin{array}{lll}
\begin{aligned}
\sigma_s^{\prime \prime}(r)+\frac{1}{r} \sigma_s^{\prime}(r)-\sigma_s(r)&=0, \\
\sigma_s(R_0)&=\underline\sigma, \\
\sigma_s^{\prime}(R)&=\beta(1-\sigma_s(R)).
\end{aligned}
\end{array}
\right.
$$
The solutions are the modified Bessel function functions of the first and second kinds, and can be written as:
\begin{equation}\label{nut_rad}
\sigma_{s}(r)=A_{1} I_{0}\left(r\right)+A_{2} K_{0}\left(r\right)
\end{equation}
Since $I_{0}^{\prime}(r)=I_{1}(r)$ and $K_{0}^{\prime}(r)=-K_{1}(r)$, we solve for $A_{1}$ and $A_{2}$ by using the boundary conditions, namely,
$$
\left\{
\begin{array}{cc}
\begin{aligned}
A_{1} I_{0}\left(R_{0}\right)+A_{2} K_{0}\left(R_{0}\right)&=\underline{\sigma},\\
A_{1} I_{1}(R)-A_{2} K_{1}(R)&=\beta\left(1-A_{1} I_{0}(R)-A_{2} K_{0}(R)\right).\label{rblin}
\end{aligned}
\end{array}
\right.
$$
Then, we have
$$
\begin{aligned}
A_{1}(\beta,\underline{\sigma}, R_0,R)
&=
\frac{\underline{\sigma}\left(K_{1}(R)-\beta K_{0}(R)\right)+\beta K_{0}(R_{0})}
{K_{0}(R_{0})\left(I_{1}(R)+\beta I_{0}(R)\right)+I_{0}(R_{0})\left(K_{1}(R)-\beta K_{0}(R)\right)},\\ 
A_{2}(\beta,\underline{\sigma},R_0,R)
&=
\frac{\underline{\sigma}\left(I_{1}(R)+\beta I_{0}(R)\right)-\beta I_{0}(R_{0})}
{K_{0}(R_{0})\left(I_{1}(R)+\beta I_{0}(R)\right)+I_{0}(R_{0})\left(K_{1}(R)-\beta K_{0}(R)\right)}.
\end{aligned}
$$
Thus
\begin{equation}\label{sigEF}
% \begin{aligned}
\sigma_s(r)=
% &\ 
% \underline{\sigma}
% \frac
% {K_{1}(R)I_0(r)+I_1(R)K_0(r)
% +\beta(-K_0(R)I_0(r)+I_0(R)K_0(r))}
% {K_0(R_0)I_1(R)+I_0(R_0)K_1(R)
% +\beta(K_0(R_0)I_0(R)-I_0(R_0)K_0(R))}
% \\
% &+
% \frac{
% \beta(
% K_0(R_0)I_0(r)-I_0(R_0)K_0(r)
% )
% }
% {K_0(R_0)I_1(R)+I_0(R_0)K_1(R)
% +\beta
% (K_0(R_0)I_0(R)-I_0(R_0)K_0(R))
% },\\=
% &\ 
\underline{\sigma}
E_{\beta}(r)
% \frac{I_1(R)K_0(r)+K_{1}(R)I_0(r)
% +\beta(I_0(R)K_0(r)-K_0(R)I_0(r))}
% {I_1(R)K_0(R_0)+K_1(R)I_0(R_0)
% +\beta(I_0(R)K_0(R_0)-K_0(R)I_0(R_0))}
% \\&
+
F_{\beta}(r),
% \frac{
% \beta(
% K_0(R_0)I_0(r)-I_0(R_0)K_0(r)
% )
% }
% {I_1(R)K_0(R_0)+K_1(R)I_0(R_0)
% +\beta
% (K_0(R_0)I_0(R)-I_0(R_0)K_0(R))
% },
% \end{aligned}
\end{equation}
where 
$$
\begin{aligned}
E_{\beta}(r)&=
\frac{I_1(R)K_0(r)+K_{1}(R)I_0(r)
+\beta(I_0(R)K_0(r)-K_0(R)I_0(r))}
{I_1(R)K_0(R_0)+K_1(R)I_0(R_0)
+\beta(I_0(R)K_0(R_0)-K_0(R)I_0(R_0))},\\
F_{\beta}(r)&=
\frac{
\beta(
K_0(R_0)I_0(r)-I_0(R_0)K_0(r)
)
}
{I_1(R)K_0(R_0)+K_1(R)I_0(R_0)
+\beta
(K_0(R_0)I_0(R)-I_0(R_0)K_0(R))
}.
\end{aligned}
$$

\begin{lemma} \label{lemma:E,F}
For all $r\in[R_0,R]$, $E_\beta(r),F_\beta(r)$ have the following properties:
\begin{enumerate}
    \item $E_\beta(r), F_\beta(r)\in [0,1]$.
    \item $E_\beta^\prime(r)\leq 0, F_\beta^\prime(r)\geq 0$.
\end{enumerate}
\end{lemma}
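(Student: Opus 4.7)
I would first identify $E_\beta$ and $F_\beta$ as the two components of the decomposition $\sigma_s = \underline\sigma\, E_\beta + F_\beta$. Substituting this ansatz into the radial nutrient BVP and separating terms yields that both functions satisfy the modified Bessel equation $u'' + u'/r - u = 0$ on $(R_0, R)$, with boundary data $E_\beta(R_0) = 1$, $E_\beta'(R) + \beta E_\beta(R) = 0$ and $F_\beta(R_0) = 0$, $F_\beta'(R) + \beta F_\beta(R) = \beta$. Both can equivalently be viewed as radial solutions on the 2D annulus $\{R_0 \leq r \leq R\}$ of the elliptic equation $-\Delta u + u = 0$.

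For part (1), I would apply the strong maximum principle together with the Hopf boundary point lemma. At any interior positive maximum one would have $\Delta u \leq 0$ while the equation forces $\Delta u = u > 0$, so such a maximum is impossible; a negative interior minimum is ruled out identically. Hence the extrema of $E_\beta$ and $F_\beta$ must lie on the boundary. For $E_\beta \leq 1$: a value exceeding $1$ cannot occur at $R_0$, and a maximum of value $> 1$ at $r = R$ would force $E_\beta'(R) > 0$ by Hopf, contradicting $E_\beta'(R) = -\beta E_\beta(R) \leq 0$. The mirror argument yields $E_\beta \geq 0$; the same pair of arguments applied to $F_\beta$ with its Robin condition $F_\beta'(R) = \beta(1 - F_\beta(R))$ yields $0 \leq F_\beta \leq 1$.

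For part (2), I would use the divergence form $(r u')' = r u$ of the ODE. Setting $v(r) := r u'(r)$, this becomes $v'(r) = r u(r)$, which is non-negative once one knows $u \geq 0$ from part (1); hence $v$ is non-decreasing on $[R_0, R]$. Applied to $E_\beta$: the outer endpoint gives $v(R) = -R\beta E_\beta(R) \leq 0$, so $v(r) \leq v(R) \leq 0$ throughout, and therefore $E_\beta'(r) \leq 0$. Applied to $F_\beta$: the Hopf boundary point lemma applied at the minimum point $F_\beta(R_0) = 0$ gives $F_\beta'(R_0) > 0$ whenever $F_\beta \not\equiv 0$, so $v(R_0) \geq 0$, and therefore $v(r) \geq v(R_0) \geq 0$ throughout, giving $F_\beta'(r) \geq 0$.

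The main bookkeeping concern I anticipate is the degenerate case $\beta = 0$ (in which $F_\beta \equiv 0$ and the Robin condition for $E_\beta$ collapses to a Neumann condition), and the routine verification that the annulus satisfies the interior-sphere condition at both boundary components so that Hopf's lemma applies. Both are immediate, and the remainder of the proof reduces to the identity $(r u')' = r u$ together with the sign information from part (1).
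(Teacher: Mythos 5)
Your proof is correct, but it follows a genuinely different route from the paper. The paper works directly with the explicit Bessel-function formulas for $E_\beta$ and $F_\beta$: it establishes the sign of each numerator via the monotonicity of $I_0, I_1, K_0, K_1$ (for example, $I_0(R)K_0(r) - K_0(R)I_0(r) \geq 0$ because $I_0$ is increasing and $K_0$ is decreasing), differentiates the closed-form expressions to get $E_\beta' \leq 0$ and $F_\beta' \geq 0$, and then deduces the bounds $E_\beta, F_\beta \leq 1$ from the endpoint values combined with monotonicity. You instead characterize $E_\beta$ and $F_\beta$ as the radial solutions of the two decoupled Robin boundary-value problems obtained by splitting $\sigma_s = \underline{\sigma}\,E_\beta + F_\beta$, prove part (1) by the maximum principle and Hopf's lemma (the zeroth-order coefficient in $-\Delta u + u = 0$ has the right sign for these to apply), and prove part (2) from the divergence form $(ru')' = ru$ together with the sign information from part (1) and the Robin data at the two endpoints. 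Your derivation of the boundary conditions $E_\beta(R_0)=1$, $E_\beta'(R)+\beta E_\beta(R)=0$, $F_\beta(R_0)=0$, $F_\beta'(R)+\beta F_\beta(R)=\beta$ is correct (they follow by matching coefficients of $\underline{\sigma}$ in the boundary conditions for $\sigma_s$, or by direct computation from the explicit formulas). The trade-off: your argument is more conceptual, avoids Bessel identities entirely, and would generalize to dimensions other than two with no change; the paper's argument is elementary and self-contained given the Bessel facts it lists, and produces the same conclusions with less machinery. One incidental advantage of your approach is that it gives $F_\beta(R)\leq 1$ directly from the Robin condition, a point the paper's proof asserts via ``$F_\beta(R)\leq 1$'' without spelling out the (easy) verification.
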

\begin{proof}
Let $r\in [R_0,R]$. Note that $E_\beta(R_0)=1$ and $F_\beta(R_0)=0$. Consider
$$
\begin{aligned}
I_{0}(R) K_{0}(r)-K_{0}(R) I_{0}(r)\ge0
&\Leftrightarrow
I_{0}(R) K_{0}(r)\ge K_{0}(R) I_{0}(r),
% \\
% &\Leftrightarrow
% \frac{I_{0}(R)}{I_{0}(r)}\frac{K_{0}(r)}{K_{0}(R)}\ge 1,
\end{aligned}
$$
which is true since $I_{0}(R)\ge I_{0}(r)$ and $K_{0}(r)\ge K_{0}(R)$, thus $E_\beta(r)\ge 0$. (Recall $I_l(r)>0$ and $K_l(r)>0$ for all $l\ge 0$.)
Similarly, 
% consider 
% $$
% \begin{aligned}
% K_{0}(R_0) I_{0}(r)-I_{0}(R_0) K_{0}(r)\ge0
% &\Leftrightarrow
% K_{0}(R_0) I_{0}(r)\ge I_{0}(R_0) K_{0}(r)\ge0\\
% &\Leftrightarrow
% \frac{I_{0}(r)}{I_{0}(R_0)}\frac{K_{0}(R_0)}{K_{0}(r)}\ge 1,\\
% \end{aligned}
% $$
since $K_{0}(r)\le K_{0}(R_0)$ and $I_{0}(r)\le I_{0}(R_0)$ imply $K_{0}(R_0) I_{0}(r)-I_{0}(R_0) K_{0}(r)\ge0$, we have $F_\beta(r)\ge 0$.
% $E_\beta(r)\le 1$ 
Next, we show $E_\beta(r)\leq 1$ by considering
$$
I_{1}(R) K_{0}^\prime(r)+K_{1}(R) I_{0}^\prime(r)\le0
\Leftrightarrow
 K_{1}(R)I_{1}(r)\le I_{1}(R)K_{1}(r),
$$
which is true since $I_{1}(R)\ge I_{1}(r)$ and $K_{1}(r)\ge K_{1}(R)$. Moreover, since
$I_{0}(R) K_{0}^\prime(r)-K_{0}(R) I_{0}^\prime(r)=-(I_{0}(R) K_{1}(r)+K_{0}(R) I_{1}(r))\leq 0$, we have $E_{\beta}^\prime(r)\leq 0$ and with $E_\beta(R_0)=1$ we obtain $E_{\beta}(r)\leq 1$. Similarly, since $K_0(R_0)I_0^\prime(r)-I_0(R_0)K_0^\prime(r)=K_0(R_0)I_1(r)+I_0(R_0)K_1(r)\geq 0$, we have $F_\beta^\prime(r)\geq 0$ and with $F_\beta(R)\leq 1$ we obtain $F_\beta(r)\leq 1$. These results are also verified numerically in Fig. \ref{fig:EF}.
\end{proof}

\begin{theorem}
$\sigma_{s}(r)\in[0,1],\forall r\in[R_0,R]$.
\end{theorem}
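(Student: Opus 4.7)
The plan is to apply the weak maximum principle together with Hopf's lemma to the linear elliptic equation $\sigma_s''+\tfrac{1}{r}\sigma_s'-\sigma_s=0$ on the annulus $R_0\le r\le R$, using the Dirichlet datum $\sigma_s(R_0)=\underline\sigma$ and the Robin datum $\sigma_s'(R)=\beta(1-\sigma_s(R))$. Throughout I use the (implicit physical) assumption $\underline\sigma\in[0,1]$, which corresponds to $0\le\sigma^N\le\sigma^\infty$.

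For the lower bound $\sigma_s\ge 0$, I would argue by contradiction. Suppose $\sigma_s$ attains its minimum $m<0$ at some $r_*\in[R_0,R]$. If $r_*\in(R_0,R)$, then $\sigma_s'(r_*)=0$ and $\sigma_s''(r_*)\ge 0$, so the ODE forces $\sigma_s(r_*)=\sigma_s''(r_*)+\tfrac{1}{r_*}\sigma_s'(r_*)\ge 0$, contradicting $m<0$. If $r_*=R_0$, then $m=\underline\sigma\ge 0$, a contradiction. If $r_*=R$, Hopf's lemma yields the strict inequality $\sigma_s'(R)<0$, whereas the Robin condition gives $\sigma_s'(R)=\beta(1-m)>0$ (since $\beta>0$ and $m<0$), again impossible. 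The upper bound $\sigma_s\le 1$ is proved by the symmetric argument: an interior maximum forces $\sigma_s\le 0<1$; a boundary maximum at $R_0$ gives the value $\underline\sigma\le 1$; and a maximum $M>1$ at $r=R$ produces $0<\sigma_s'(R)=\beta(1-M)<0$, impossible.

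As an alternative more in line with the Bessel-computation style of Lemma \ref{lemma:E,F}, one could substitute the explicit representation $\sigma_s(r)=\underline\sigma E_\beta(r)+F_\beta(r)$: the lower bound then follows immediately from the nonnegativity of $\underline\sigma$, $E_\beta$, and $F_\beta$ given by Lemma \ref{lemma:E,F}, but the upper bound requires the auxiliary estimate $E_\beta(r)+F_\beta(r)\le 1$, which is not a direct consequence of the monotonicity listed there. The main obstacle along that route is establishing this combined bound via Bessel-function manipulations; the maximum-principle approach sidesteps the algebra entirely, so that is the route I would take in writing the proof.
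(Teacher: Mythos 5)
Your maximum-principle proof is correct and complete. It differs from the paper's \emph{stated} proof, which simply declares the theorem ``a consequence of $\underline\sigma<1$, \eqref{sigEF} and Lemma~\ref{lemma:E,F}'': as you rightly observe, those ingredients give the lower bound $\sigma_s\ge 0$ directly (from $\underline\sigma\ge 0$, $E_\beta\ge 0$, $F_\beta\ge 0$), but they do not literally yield $\sigma_s\le 1$, since $\underline\sigma E_\beta+F_\beta$ is not controlled by $\underline\sigma<1$ together with $E_\beta,F_\beta\in[0,1]$ alone --- one would additionally need something like $E_\beta(r)+F_\beta(r)\le 1$, which Lemma~\ref{lemma:E,F} does not assert. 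The paper effectively concedes this in the remark immediately after the theorem, noting the bound ``also'' follows from the weak maximum principle; your argument is a properly fleshed-out version of that remark, ruling out interior extrema via the ODE $\sigma_s=\sigma_s''+\tfrac{1}{r}\sigma_s'$ and handling the Robin endpoint $r=R$ via Hopf's lemma (the sign conditions for Hopf are satisfied since the zeroth-order coefficient is $-1\le 0$ and the putative boundary extremum has the required sign). One cosmetic point: writing the contradiction as the chain $0<\sigma_s'(R)=\beta(1-M)<0$ is a little jarring since it asserts a quantity is simultaneously positive and negative, but that is exactly the contradiction intended and the logic is sound. In short, you have given the rigorous version of the route the paper only sketches in its remark, and your diagnosis of the gap in the algebraic route is accurate.
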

\begin{proof}
It is the consequence of $\underline{\sigma}<1$,   \eqref{sigEF} and Lemma \ref{lemma:E,F}.
% $\underline{\sigma}\leq \sigma_{s}(r) \leq 1$.
% and note that $E_\beta(R_0)=1,F_\beta(R_0)=0$
\end{proof}

\begin{remark}
By weak maximum principle, we also have $0\leq\sigma_{s}(r) \leq 1$ for $R_0 \leq r \leq R$. 
% (Strict inequality holds with the help of Hopf lemma.)
% By {\blue\bf\sout{Hopf lemma and} weak} maximum principle, we also have $0\leq\sigma_{s}(r) \leq 1$ for $R_0 \leq r \leq R$. {\blue\bf (Strict inequality holds with the help of Hopf lemma.)}
% {\red\bf Hopf lemma actually implies $0<\sigma_{s}(r) < 1$ for $R_0 \leq r \leq R$. If we do not need the strict inequality, then the weak maximum principle alone would be enough.
% }
% $,\sigma_s^\prime(R_0)<0,$ and $\sigma_s^{\prime}(R)<0$.
\end{remark}

\begin{figure}
    \centering
    \includegraphics[width=\textwidth]{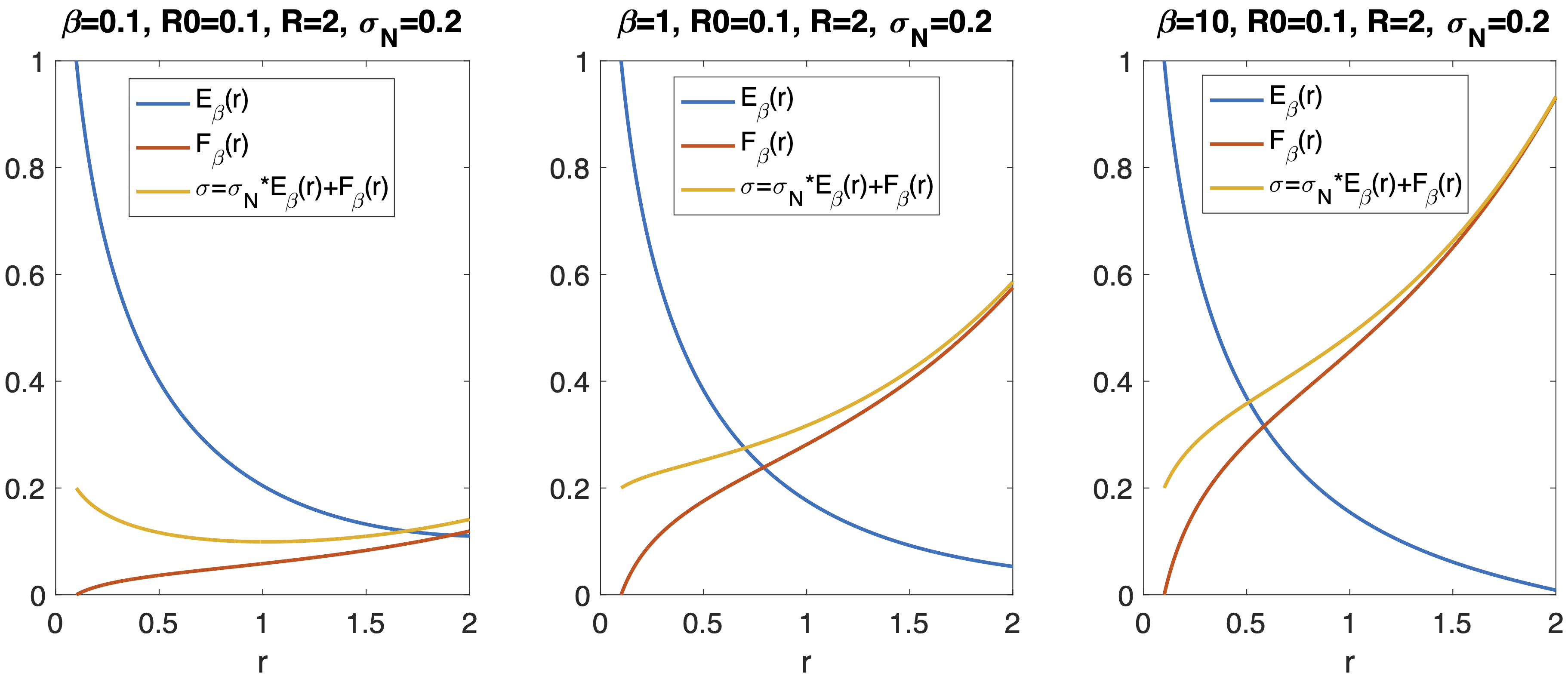}
    \caption{A numerical validation of Lemma \ref{lemma:E,F} by varying $\beta=0.1,1,10$ and fixing $l=2$.}
    \label{fig:EF}
    % \begin{minipage}{\linewidth}
    % \includegraphics[width=\textwidth]{}
    % \end{minipage}
    % \begin{minipage}{\linewidth}
    % \includegraphics[width=\textwidth]{}
    % \end{minipage}
\end{figure}

\noindent
Note that $\forall r\in(R_0,R]$,
$$
\begin{aligned}
\lim_{\beta\rightarrow 0}\sigma_s(r)
&
% =\underline{\sigma}
% \frac{K_{1}(R)I_0(r)+I_1(R)K_0(r)}
% {K_0(R_0)I_1(R)+I_0(R_0)K_1(R)}
=
\underline{\sigma}
E_0(r)
% \frac{
% I_1(R)K_0(r)+K_{1}(R)I_0(r)}
% {I_1(R)K_0(R_0)+K_1(R)I_0(R_0)}
,\\ 
\lim_{\beta\rightarrow \infty}\sigma_s(r)
% &=
% \underline{\sigma}
% \frac{-K_{0}(R)I_0(r)+I_0(R)K_0(r)}
% {K_0(R_0)I_0(R)-I_0(R_0)K_0(R)}
% +\frac{K_0(R_0)I_0(r)
% -I_0(R_0)K_0(r)}
% {K_0(R_0)I_0(R)-I_0(R_0)K_0(R)},\\
&=
\underline{\sigma}
E_\infty(r)
+F_\infty(r).
% \frac{I_0(R)K_0(r)-K_{0}(R)I_0(r)}
% {I_0(R)K_0(R_0)-K_0(R)I_0(R_0)}
% +\frac{K_0(R_0)I_0(r)-I_0(R_0)K_0(r)
% }
% {K_0(R_0)I_0(R)-I_0(R_0)K_0(R)},
\end{aligned}
$$
where
$$
\begin{aligned}
E_0(r)&=\frac{
I_1(R)K_0(r)+K_{1}(R)I_0(r)}
{I_1(R)K_0(R_0)+K_1(R)I_0(R_0)},\\
E_\infty(r)&=
\frac{I_0(R)K_0(r)-K_{0}(R)I_0(r)}
{I_0(R)K_0(R_0)-K_0(R)I_0(R_0)},\\
F_\infty(r)&=
\frac{K_0(R_0)I_0(r)-I_0(R_0)K_0(r)
}
{K_0(R_0)I_0(R)-I_0(R_0)K_0(R)}.
\end{aligned}
$$
Since $\displaystyle\lim_{r\rightarrow 0}I_0(r)=1,\lim_{r\rightarrow 0}K_0(r)=\infty$, we have 
\begin{align}
% \lim_{\beta\rightarrow 0,R_0\rightarrow 0}\sigma_s(r)
% &=0,\\
% \lim_{\beta\rightarrow 0,R_0\rightarrow 0}\sigma^\prime_s(r)
% &=0,\\
\label{nutlim}
\lim_{\substack{\beta\rightarrow \infty\\R_0\rightarrow 0}}\sigma_s(r)
&=
\frac{I_0(r)}{I_0(R)},\\
\lim_{\substack{\beta\rightarrow \infty\\R_0\rightarrow 0}}\sigma^\prime_s(r)
&=
\frac{I_1(r)}{I_0(R)}.
\end{align}

\paragraph{\textbf{Radially symmetric solution of $p$}} 
By rewriting the first equation in \eqref{pressurefield} as $\Delta(p+\left (\mathcal{P}-\chi_{\sigma}\right) \sigma)=\mathcal{P}\mathcal{A}$, we have
\begin{equation}\label{ps_rad}
p_{s}(r)=- (\mathcal{P}-\chi_{\sigma})\sigma_{s}(r)+C_{1}+C_{2} \ln r+\frac{\mathcal{P}\mathcal{A}}{4}  r^{2} .
\end{equation}
The boundary conditions in radially symmetric case become
$$
p_{s}(R)=\mathcal{G}^{-1} \frac{1}{R},\quad p_s^\prime(R_0)=\chi_\sigma \sigma_s^\prime(R_0)\text{, and } p_s^\prime(R)=\chi_\sigma \sigma_s^\prime(R),
$$
which are used to determine $C_{1}, C_{2}$,
and $\mathcal{A}$,
namely,
$$
\begin{aligned}
C_{1}&=\mathcal{G}^{-1} \frac{1}{R}+(\mathcal{P}-\chi_{\sigma})\sigma_s(R)
-C_2 \ln(R)
-\frac{\mathcal{P} \mathcal{A}R^2}{4}, \nonumber \\
C_{2}&=\mathcal{P}\sigma_s^\prime(R_0) R_{0}-\frac{\mathcal{P} \mathcal{A}R_{0}^{2} }{2} ,
\end{aligned}
$$
where
\begin{equation}\label{apopt}
\mathcal{A}=\frac{2}{R^2-R_0^2}\left(R\sigma_s^\prime(R)-R_0\sigma_s^\prime(R_0) \right).
\end{equation}
Thus
\begin{align}
    p_s(r)=&\ \notag 
    \mathcal{G}^{-1} \frac{1}{R}
     +(\mathcal{P}-\chi_\sigma)(\sigma_s(R)-\sigma_s(r))\\
    &\ +\left(\mathcal{P}\sigma_s^\prime(R_0)R_0-\frac{\mathcal{P}\mathcal{A}R_0^2}{2}\right)\ln\left(\frac{r}{R}\right)
     -\frac{\mathcal{P}\mathcal{A}}{4}(R^2-r^2),\\
     p^\prime_s(r)=&\frac{\mathcal{P}\mathcal{A}}{2}r-(\mathcal{P}-\chi_{\sigma})\sigma^\prime(r)
     +\left(\mathcal{P}\sigma^{\prime}_s(R_0)-\frac{\mathcal{P}\mathcal{A}R_0}{2} \right)\frac{R_0}{R}.
\end{align}
Note that
$$
\begin{aligned}
\lim_{\beta\rightarrow \infty, R_0\rightarrow 0}p_s(r)&=
\mathcal{G}^{-1} \frac{1}{R}
+(\mathcal{P}-\chi_\sigma)\left(1-\frac{I_0(r)}{I_1(R)}\right)
-\frac{\mathcal{P}\mathcal{A}}{4}(R^2-r^2),\\
\lim_{\beta\rightarrow \infty, R_0\rightarrow 0}p^\prime_s(r)&=
\frac{\mathcal{P}\mathcal{A}}{2}r-(\mathcal{P}-\chi_\sigma)\frac{I_0(r)}{I_1(R)}
.
\end{aligned}
$$
For any given $\mathcal{A}$, we compute $R$ by solving $\eqref{apopt}$. Therefore, the existence of $R$ is critical for our model. In order to prove the existence, we solve $\mathcal{A}$ for any given $R$ and have the following theorem.

\begin{theorem}
For any given $\mathcal{P},\chi_\sigma,\beta,R_0$ and $R>0$, there exists a unique $\mathcal{A}>0$ such that a stationary solution $\left(\sigma_{s}, p_{s}\right)$ is given by $\eqref{nut_rad}$ and $\eqref{ps_rad}$.
\end{theorem}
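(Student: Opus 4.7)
The key observation is that the formula \eqref{sigEF} for $\sigma_{s}$ depends only on the parameters $\beta,\underline{\sigma},R_{0},R$ and is independent of $\mathcal{P},\chi_{\sigma},\mathcal{A}$. Consequently, equation \eqref{apopt} is not a transcendental equation to be inverted, but rather an \emph{explicit definition} of $\mathcal{A}$ in terms of the radially symmetric nutrient profile already computed. This immediately settles uniqueness: given $R>0$ (and the other parameters), there is exactly one admissible value of $\mathcal{A}$, namely that defined by \eqref{apopt}, and then $C_{2}$ followed by $C_{1}$ in the expression for $p_{s}$ are determined in turn. Hence the entire content of the theorem reduces to verifying the positivity statement $\mathcal{A}>0$.

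To prove positivity, I would apply the divergence theorem to the radial PDE $\Delta \sigma_{s}=\sigma_{s}$ on the annulus $\Omega_{\mathrm{ann}}=\{R_{0}<|x|<R\}$. Since the outward normal on the outer boundary is $\hat{r}$ and on the inner boundary is $-\hat{r}$,
\begin{equation}
\int_{\Omega_{\mathrm{ann}}}\sigma_{s}\,dx
=\int_{\Omega_{\mathrm{ann}}}\Delta\sigma_{s}\,dx
=2\pi R\,\sigma_{s}^{\prime}(R)-2\pi R_{0}\,\sigma_{s}^{\prime}(R_{0}).
\end{equation}
Substituting into \eqref{apopt} gives the clean identity
\begin{equation}
\mathcal{A}=\frac{1}{\pi(R^{2}-R_{0}^{2})}\int_{\Omega_{\mathrm{ann}}}\sigma_{s}\,dx,
\end{equation}
so positivity of $\mathcal{A}$ is equivalent to strict positivity of the integral of $\sigma_{s}$.

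For the final step, I would invoke the preceding Theorem (together with the Remark on the weak maximum principle) to conclude $\sigma_{s}\ge 0$ on $[R_{0},R]$, and then argue $\sigma_{s}\not\equiv 0$. Under the natural assumption $\underline{\sigma}>0$ this is immediate since $\sigma_{s}(R_{0})=\underline{\sigma}>0$; in the degenerate case $\underline{\sigma}=0$ one instead uses the Robin condition $\sigma_{s}^{\prime}(R)=\beta(1-\sigma_{s}(R))$ together with the strong maximum principle to rule out the trivial solution. In either subcase the integral is strictly positive and $\mathcal{A}>0$ follows.

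The proof is essentially an accounting argument plus one application of the divergence theorem; there is no real obstacle. The only point that deserves care is making explicit that $\sigma_{s}$ is determined independently of $\mathcal{A}$, so that \eqref{apopt} yields a formula rather than a fixed-point problem — after which positivity is a direct consequence of the sign of the nutrient, already established in Lemma~\ref{lemma:E,F} and the Theorem just above.
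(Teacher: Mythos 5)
Your proof is correct and follows essentially the same route as the paper. The paper sets $f(r)=r\sigma_{s}'(r)$, observes from the ODE that $f'(r)=r\sigma_{s}(r)\ge 0$, and concludes $R\sigma_{s}'(R)\ge R_{0}\sigma_{s}'(R_{0})$; your divergence-theorem identity
$\int_{\Omega_{\mathrm{ann}}}\sigma_{s}\,dx = 2\pi\bigl(R\sigma_{s}'(R)-R_{0}\sigma_{s}'(R_{0})\bigr)$
is exactly the integral of that same pointwise inequality $(r\sigma_{s}')'=r\sigma_{s}$, so the two arguments are equivalent in content — yours is phrased globally, the paper's locally. One small respect in which your write-up is actually more careful: the paper's chain $f(R)\ge f(R_{0})$ only delivers $\mathcal{A}\ge 0$ as stated, whereas the theorem asserts strict positivity; your observation that $\sigma_{s}\ge 0$ with $\sigma_{s}\not\equiv 0$ (via $\underline{\sigma}>0$ or the Robin condition plus the strong maximum principle) closes that gap and gives $\mathcal{A}>0$ cleanly.
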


\begin{proof}
For any given $R$, it is obvious that $\mathcal{A}$ is uniquely determined by \eqref{apopt}. Next, we prove $\mathcal{A}>0$ by letting
$$
f(r)=r \sigma_{s}^{\prime}(r) .
$$
Since
$$
f^\prime(r)=r \sigma_{s}^{\prime \prime}(r)+\sigma_{s}^{\prime}(r)= r \sigma_{s}(r) \geq 0,
$$
we have
$$
R \sigma_{s}^{\prime}(R)=f(R) \geq f(R_0)=R_0 \sigma_{s}^{\prime}(R_0),
% =-R_0 \sigma_{s}(R_0),
$$
which implies
$$
\mathcal{A}=\frac{2}{R^2-R_0^2}\left(R\sigma_s^\prime(R)-R_0\sigma_s^\prime(R_0) \right) \geq 0.
$$
\end{proof}

\section{Bifurcation analysis}
\label{sect:bifur}
\subsection{The linearized system}
First, we derive the linearized system of equations \eqref{nutrientfield},\eqref{pressurefield} and \eqref{eqnofmotion} with a perturbed domain $\Omega_{\varepsilon}$ to $\Omega$, namely, $\Gamma_{\varepsilon}=\left\{r \mid r=R+\varepsilon R_{1}(\theta)\right\}$,

\begin{equation}\label{linpde}
\left\{\begin{array}{lll}
\begin{aligned}
\Delta \sigma&=\sigma, &x \in \Omega_{\varepsilon}, \\ 
\sigma&=\underline\sigma, &x \in \Gamma_{0}, \\
\frac{\partial \sigma}{\partial \mathbf{n}}&=\beta(1-\sigma), &x \in \Gamma_{\varepsilon}, \\ 
-\Delta p&=(\mathcal{P}-\chi_\sigma)\sigma-\mathcal{P}\mathcal{A}, &x \in \Omega_{\varepsilon}, \\
\frac{\partial p}{\partial \mathbf{n}_0}&=\chi_\sigma \frac{\partial \sigma}{\partial\mathbf{n}_0}, &x \in \Gamma_{0}, \\ 
p&=\mathcal{G}^{-1} \kappa, &x \in \Gamma_{\varepsilon}.
\end{aligned}
\end{array}\right.       
\end{equation}

By defining the following nonlinear function $F$ based on the free boundary condition:
\begin{equation}\label{bifurcation}
  F\left(\varepsilon R_{1}, \mathcal{P}\right)=-\left.\frac{\partial p}{\partial r}\right|_{\Gamma_{\varepsilon}}+\chi_{\sigma}\left.\frac{\partial \sigma}{\partial r}\right|_{\Gamma_{\varepsilon}},  
\end{equation}
we conclude that $R_{1}(\theta)$ induces a stationary solution if and only if $F\left(R_{1}, \mathcal{P}\right)=0$. Then, we consider the solution of $(11),(\sigma, p)$, up to the second order of $\varepsilon$, in a formal expansion:

\begin{equation}\label{expansions}
\begin{aligned}
\sigma(r, \theta) &=\sigma_{s}(r)+\varepsilon \sigma_{1}(r, \theta)+\mathcal{O}\left(\varepsilon^{2}\right), \\
p(r, \theta) &=p_{s}(r)+\varepsilon p_{1}(r, \theta)+\mathcal{O}\left(\varepsilon^{2}\right) .
\end{aligned}
\end{equation}

$\sigma$ on $\Gamma_{\varepsilon}$ is then given by
$$
\begin{aligned}
\left.\sigma(r, \theta)\right|_{\Gamma_{\varepsilon}}&=\sigma\left(R+\varepsilon R_{1}, \theta\right) \\
&=\sigma_{s}\left(R+\varepsilon R_{1}\right)+\varepsilon \sigma_{1}\left(R+\varepsilon R_{1}, \theta\right)+\mathcal{O}\left(\varepsilon^{2}\right)\\
&=\sigma_{s}(R)+\varepsilon R_{1} \sigma_s^\prime(R)+\varepsilon \left.\sigma_{1}\right|_{r=R}+\mathcal{O}\left(\varepsilon^{2}\right)\\
&=\sigma_{s}(R)+\varepsilon\left( R_{1} \sigma_s^\prime(R)+ \left.\sigma_{1}\right|_{r=R}\right)+\mathcal{O}\left(\varepsilon^{2}\right).
\end{aligned}
$$
Similarly,
$$
\left.p(r, \theta)\right|_{\Gamma_{\varepsilon}}
=p_{s}(R)+\varepsilon\left( R_{1} p_s^\prime(R)+ \left.p_{1}\right|_{r=R}\right)+\mathcal{O}\left(\varepsilon^{2}\right).
$$
Thus, the boundary condition on $\Gamma_\varepsilon$ for $\sigma$ becomes
$$\begin{aligned}
\sigma_s^\prime(R)&+\varepsilon\left( R_1\sigma_s^{\prime\prime}(R)+\left.\frac{\partial\sigma_1}{\partial r}\right|_{r=R} \right)\\
&=\beta\left(1- 
\left(\sigma_{s}(R)+\varepsilon\left( R_{1} \sigma_s^\prime(R)+ \left.\sigma_{1}\right|_{r=R}\right) \right)\right)
+\mathcal{O}\left(\varepsilon^{2}\right).
\end{aligned}
$$
Since the mean curvature is given by
$$
\kappa=\frac{r^{2}+2 r_{\theta}^{2}-r r_{\theta \theta}}{\left(r^{2}+r_{\theta}^{2}\right)^{3 / 2}},
$$
the linearization of $\kappa$ becomes
$$
\begin{aligned}
\left.\kappa\right|_{\Gamma_{\varepsilon}}=& \frac{\left(R+\varepsilon R_{1}\right)^{2}+2\left(R_{\theta}+\varepsilon R_{1 \theta}\right)^{2}}{\left(\left(R+\varepsilon R_{1}\right)^{2}+\left(R_{\theta}+\varepsilon R_{1 \theta}\right)^{2}\right)^{3 / 2}} \\
&-\frac{\left(R+\varepsilon R_{1}\right)\left(R_{\theta \theta}+\varepsilon R_{1 \theta \theta}\right)}{\left(\left(R+\varepsilon R_{1}\right)^{2}+\left(R_{\theta}+\varepsilon R_{1 \theta}\right)^{2}\right)^{3 / 2}} \\
=& \kappa_{0}+\varepsilon \kappa_{1}+\mathcal{O}\left(\varepsilon^{2}\right),
\end{aligned}
$$
where
$$
\kappa_{0}=\frac{R^{2}+2 R_{\theta}^{2}-R R_{\theta \theta}}{\left(R^{2}+R_{\theta}^{2}\right)^{3 / 2}}=\frac{R^2}{R^3}=\frac{1}{R},
$$
and
$$
\begin{aligned}
\kappa_{1}=&\left(\frac{2 R-R_{\theta \theta}}{\left(R_{\theta}^{2}+R^{2}\right)^{3 / 2}}-\frac{3}{2} \frac{\left(R^{2}+2 R_{\theta}^{2}-R R_{\theta \theta}\right) 2 R}{\left(R_{\theta}^{2}+R^{2}\right)^{5 / 2}}\right) R_{1} \\
&+\left(\frac{4 R_{\theta}}{\left(R_{\theta}^{2}+R^{2}\right)^{3 / 2}}-\frac{3}{2} \frac{\left(R^{2}+2 R_{\theta}^{2}-R R_{\theta \theta}\right) 2 R_{\theta}}{\left(R_{\theta}^{2}+R^{2}\right)^{5 / 2}}\right) R_{1 \theta} \\
&-\frac{R}{\left(R_{\theta}^{2}+R^{2}\right)^{3 / 2}} R_{1 \theta \theta}\\
=&\left(\frac{2R}{R^3}-\frac{3R^3}{R^5}\right)R_{1}
-\frac{R}{R^3}R_{1\theta\theta}\\
=&-\frac{R_1+R_{1\theta\theta}}{R^2}.
\end{aligned}
$$
After dropping the higher-order terms, we obtain the linearized system below,
\begin{equation}\label{linearized}
\left\{\begin{aligned}
\Delta \sigma_{1}&= \sigma_{1} \quad \text { in } \Omega, \\
\left.\sigma_{1}\right|_{r=R_0}&=0, \\
R_1\sigma_s^{\prime\prime}(R)+\left.\frac{\partial\sigma_1}{\partial r}\right|_{r=R}&=-\beta
\left( R_{1} \sigma_s^\prime(R)+ \left.\sigma_{1}\right|_{r=R} \right), \\
\Delta p_{1}&=-(\mathcal{P}-\chi_\sigma) \sigma_{1} \quad \text { in } \Omega, \\
\left.\frac{\partial p_{1}}{\partial r}\right|_{r=R_0}
&=\chi_\sigma\left.\frac{\partial \sigma_{1}}{\partial r}\right|_{r=R_0}, \\
R_{1} p_s^\prime(R)+ \left.p_{1}\right|_{r=R}&=-\frac{\mathcal{G}^{-1}}{R^{2}}\left(R_{1}+R_{1 \theta \theta}\right).
\end{aligned}\right.
\end{equation}
By separation of variables, $\sigma_{1}(r, \theta)= Q_{l}(r)R_1(\theta)$, and assuming $R_{1}(\theta)=\cos (l \theta)$, we have
\begin{equation}
Q_{l}(r)={B}_{1} I_{l}\left(r\right)+{B}_{2} K_{l}\left(r\right),    
\end{equation}
which satisfies 
\begin{equation}\label{Ql}
    \left\{
    \begin{aligned}
    r^{2} Q_{l}^{\prime \prime}+r Q_{l}^{\prime}-\left(r^{2}+l^{2}\right) Q_{l}&=0,\\
    Q_{l}(R_0)&=0,\\
    \sigma_s^{\prime\prime}(R)+Q_l^{\prime}(R)&=-\beta
\left(\sigma_{s}^{\prime}(R)+Q_{l}(R) \right).
    \end{aligned}
    \right.
\end{equation}
Therefore $B_1$ and $B_2$ satisfy
$$
\left\{
\begin{aligned}
B_{1} I_{l}\left( R_{0}\right)+B_{2} K_{l}\left( R_{0}\right)&=0,\\
B_{1}I^\prime_{l}(R)+B_{2}K^\prime_l(R)
&=-\sigma^{\prime\prime}_s(R)-\beta\left(\sigma_{s}^{\prime}(R)+B_{1} I_{l}\left( R\right)+B_{2} K_{l}\left( R\right) \right),
\end{aligned}
\right.
$$
then we have
$$
% \begin{aligned}
B_1=K_l(R_0)B(\beta,R_0,R), \ 
B_2=-I_l(R_0)B(\beta,R_0,R),
% \end{aligned}    
$$
where
$$
B(\beta,R_0,R)=\frac{\sigma_s^{\prime\prime}(R)+\beta\sigma_s^\prime(R)}{I_l(R_0)(K_l^\prime(R)+\beta K_l(R))-K_l(R_0)(I_l^\prime(R)+\beta I_l(R))}.
$$
Thus
$$
Q_l(r)=-(\sigma_s^{\prime\prime}(R)+\beta\sigma_s^\prime(R))
G_\beta(r),
$$
where
$$
G_\beta(r)=\frac{K_l(R_0)I_l(r)-I_l(R_0)K_l(r)}
{K_l(R_0)I_l^\prime(R)-I_l(R_0)K_l^\prime(R)+\beta(K_l(R_0)I_l(R)-I_l(R_0)K_l(R))}.
$$
Note that
\begin{align}
\lim_{\beta\rightarrow 0}Q_l(r)
&=-\underline\sigma E_0^{\prime\prime}(R)G_0(r),\\ 
\lim_{\beta\rightarrow \infty}Q_l(r)
&=-(\underline\sigma E_\infty^{\prime}(R)+F_\infty^{\prime}(R))G_\infty(r),
\end{align}
where
$$
\begin{aligned}
G_0(r)
&=\frac{K_l(R_0)I_l(r)-I_l(R_0)K_l(r)}{K_l(R_0)I_l^\prime(R)-I_l(R_0)K_l^\prime(R)},\\ 
G_\infty(r)
&=\frac{K_l(R_0)I_l(r)-I_l(R_0)K_l(r)}{K_l(R_0)I_l(R)-I_l(R_0)K_l(R)}.
\end{aligned}
$$
Since $\displaystyle\lim_{r\rightarrow 0}I_l(r)=0,\lim_{r\rightarrow 0}K_l(r)=\infty,\forall l\ge 2$, and \eqref{nutlim} , we have
\begin{align}\label{Qllim}
\lim_{\substack{\beta\rightarrow \infty\\R_0\rightarrow 0}}
Q_l(r)&=-\frac{I_1(R)}{I_0(R)}\frac{I_l(r)}{I_l(R)},  \\
\lim_{\substack{\beta\rightarrow\infty\\R_0\rightarrow 0}}
Q_l^\prime(r)&=-\frac{I_1(R)}{I_0(R)}\frac{I_l^\prime(r)}{I_l(R)}.
\label{Qlplim}
\end{align}

\begin{figure}
    \centering
    \includegraphics[width=\textwidth]{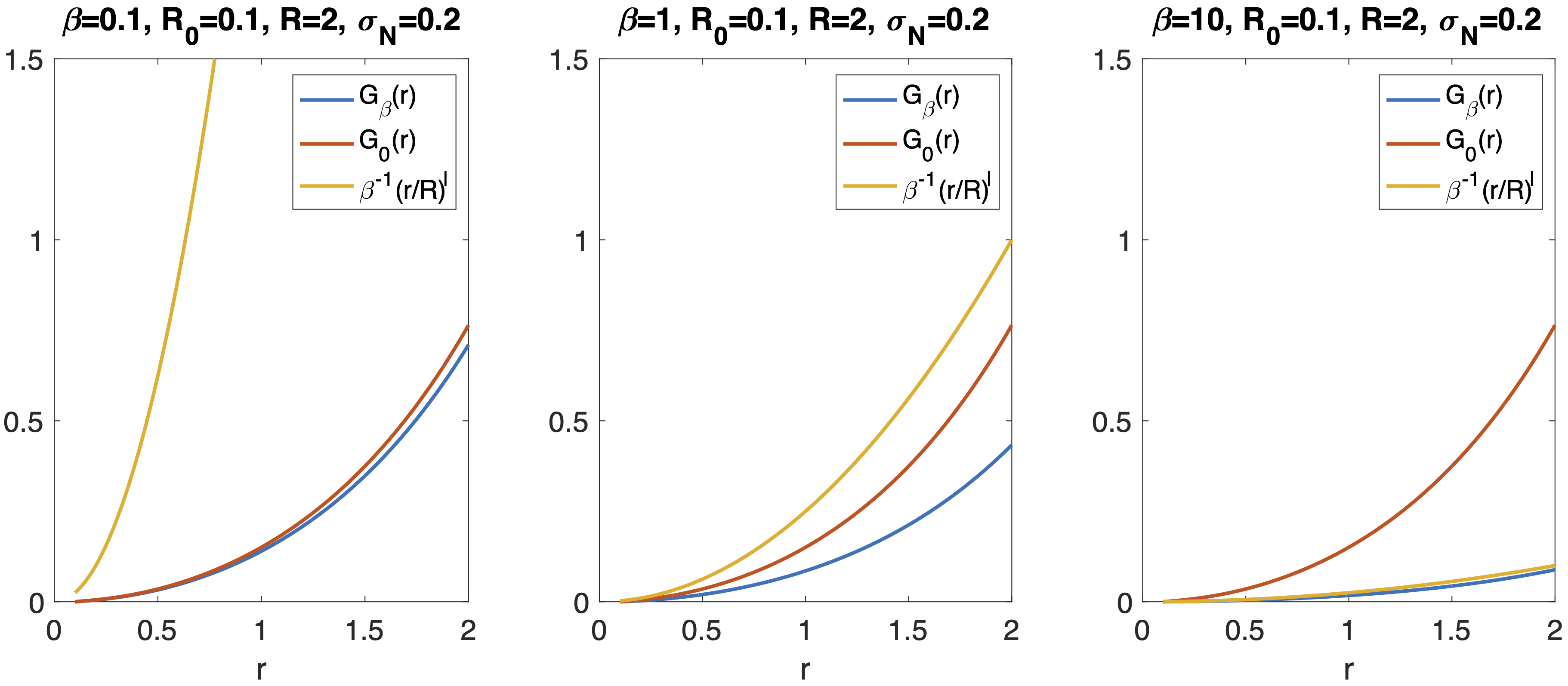}
    \caption{A numerical validation of Lemma \ref{lemma:G} by varying $\beta=0.1,1,10$ and fixing $l=2$.}
    \label{fig:G}
\end{figure}

\begin{lemma}\label{lemma:G}
For all $r\in [R_0,R], G_\beta(r)$ has the following properties:
\begin{enumerate}
    \item $G_\beta^\prime(r)\geq 0$.
    \item $G_\beta(r)\in \left[0,\min\left\{
G_0(R),\frac{1}{\beta}
\right\}\right]$.
    \item $0\le G_\beta(r)\le \min\left\{G_0(r),\frac{1}{\beta}\left(\frac{r}{R}\right)^l\right\}$.
\end{enumerate}
\end{lemma}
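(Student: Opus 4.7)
The plan is to introduce the numerator $N(r) = K_l(R_0)I_l(r) - I_l(R_0)K_l(r)$, so that
$$G_\beta(r) = \frac{N(r)}{N'(R) + \beta N(R)},$$
and argue entirely through $N$. Since $N(R_0) = 0$ and $N'(r) = K_l(R_0)I_l'(r) - I_l(R_0)K_l'(r)$ is a sum of two positive quantities (using $I_l' > 0$ and $-K_l' > 0$ from the Bessel properties recalled at the start of Section~3), $N$ is nonnegative and strictly increasing on $[R_0,R]$, while the denominator $D_\beta := N'(R) + \beta N(R)$ is positive. This yields part~(1) at once: $G_\beta(r) \geq 0$ and $G_\beta'(r) = N'(r)/D_\beta \geq 0$.

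For part~(2), the monotonicity just established shows that the maximum of $G_\beta$ on $[R_0,R]$ is attained at $r = R$, where $G_\beta(R) = N(R)/(N'(R) + \beta N(R))$. Comparing directly with $G_0(R) = N(R)/N'(R)$ yields $G_\beta(R) \leq G_0(R)$, and dividing the numerator and denominator of $G_\beta(R)$ by $\beta N(R)$ yields $G_\beta(R) \leq 1/\beta$. Combined with $G_\beta \geq 0$, this closes part~(2).

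For part~(3), the pointwise comparison with $G_0$ is immediate, since $G_\beta(r)/G_0(r) = N'(R)/(N'(R) + \beta N(R)) \leq 1$ independently of $r$. For the bound $G_\beta(r) \leq (r/R)^l/\beta$, I will show that $N(r)/r^l$ is non-decreasing on $[R_0,R]$. The recurrences $I_l'(r) = (l/r)I_l(r) + I_{l+1}(r)$ and $K_l'(r) = (l/r)K_l(r) - K_{l+1}(r)$ listed earlier give $(I_l(r)/r^l)' = I_{l+1}(r)/r^l > 0$ and $(K_l(r)/r^l)' = -K_{l+1}(r)/r^l < 0$, so
$$\bigl(N(r)/r^l\bigr)' = K_l(R_0)\,\frac{I_{l+1}(r)}{r^l} + I_l(R_0)\,\frac{K_{l+1}(r)}{r^l} > 0.$$
Therefore $N(r) \leq N(R)(r/R)^l$ for $R_0 \leq r \leq R$, and consequently $G_\beta(r) \leq N(r)/(\beta N(R)) \leq (r/R)^l/\beta$.

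The step I expect to be the main obstacle is the monotonicity of $N(r)/r^l$, since it is the only part of the argument that genuinely invokes the Bessel recurrences rather than elementary algebra on the single quotient $N(r)/D_\beta$. Once it is in hand, all three assertions fall out immediately.
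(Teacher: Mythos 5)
Your proof is correct, and for parts~(1) and~(2) it follows the same elementary algebra as the paper: writing $G_\beta=N/(N'(R)+\beta N(R))$, observing the denominator is positive, using $N'(r)>0$ and $N(R_0)=0$ to get nonnegativity and monotonicity, and reading off the two bounds at $r=R$. The genuine divergence is in part~(3). The paper also starts from the pointwise bound $G_\beta(r)\le \tfrac1\beta G_\infty(r)$ with $G_\infty(r)=N(r)/N(R)$, but then invokes a PDE comparison principle: it claims $H(r)=\bigl(\tfrac{r-R_0}{R-R_0}\bigr)^l$ solves $-\Delta H+(l^2/r^2)H=0$ with the same boundary data as $G_\infty$, and concludes $G_\infty\le H\le (r/R)^l$. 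You instead prove directly that $N(r)/r^l$ is increasing, by differentiating and using the recurrences $I_l'(r)-\tfrac lr I_l(r)=I_{l+1}(r)$ and $K_l'(r)-\tfrac lr K_l(r)=-K_{l+1}(r)$, which gives $N(r)/N(R)\le (r/R)^l$ with no auxiliary boundary-value problem. Your route is not only more elementary; it is also more robust, because the paper's intermediate function $H$ does \emph{not} actually satisfy $-\Delta H+(l^2/r^2)H=0$ when $R_0>0$ (that operator has fundamental solutions $r^{\pm l}$, not $(r-R_0)^l$), so the comparison argument as written has a gap even though the final inequality is correct. Your Bessel-recurrence computation, which you rightly flagged as the crux, sidesteps that issue entirely.
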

\begin{proof}
Let $r\in[R_0,R]$. To show (1), note that the denominator of $G_\beta(r)$ is positive since $K_l^\prime(r)<0$, $I_l^\prime(r)>0$, $0<I_l(R_0)<I_l(R)$ and $0<K_l(R)<K_l(R_0)$. Thus $ G_\beta^\prime(r)\ge 0
\Leftrightarrow K_l(R_0)I_l^\prime(r)-I_l(R_0)K_l^\prime(r)\ge 0,$ which holds true. Now, consider $G_\beta(R_0)=0$ and
$$
\begin{aligned}
G_\beta(R)
&=\frac{K_l(R_0)I_l(R)-I_l(R_0)K_l(R)}
{K_l(R_0)I_l^\prime(R)-I_l(R_0)K_l^\prime(R)+\beta(K_l(R_0)I_l(R)-I_l(R_0)K_l(R))}\\
&\le
\min\left\{G_0(R)
,\frac{1}{\beta}G_\infty(R)
\right\},
\end{aligned}
$$
combining with (1) and $G_\infty(R)=1$ we have (2).
% $G_\beta(r)\in \left[\max\left\{
% G_0(R)
% ,\frac{1}{\beta}
% \right\},0\right]$. 
To show (3), define $G(\mathbf x):=G_\infty(|\mathbf x|)$ and $H(\mathbf x):=\left(\frac{|\mathbf x|-R_0}{R-R_0}\right)^l$ which satisfy
$$
\left\{
\begin{aligned}
-\Delta G +\left(\frac{l^2}{r^2}+1\right)G&=0,\\
\left.G\right|_{r=R_0}&=0,\\
\left.G\right|_{r=R}&=1,
\end{aligned}
\right.
$$
and
$$
\left\{
\begin{aligned}
-\Delta H +\left(\frac{l^2}{r^2}\right)H&=0,\\
\left.H\right|_{r=R_0}&=0,\\
\left.H\right|_{r=R}&=1,
\end{aligned}
\right.
$$
respectively.
Then $G_\infty(r)\le\left(\frac{r-R_0}{R-R_0}\right)^l$ by comparison principle. Note that $\left(\frac{r-R_0}{R-R_0}\right)^l\le\left(\frac{r}{R}\right)^l$. Hence $G_\beta(r)\le\min\left\{G_0(r),\frac{1}{\beta}G_\infty(r)\right\}\le\min\left\{G_0(r),\frac{1}{\beta}\left(\frac{r}{R}\right)^l\right\}$. These results are also verified numerically in Fig. \ref{fig:G}.
\end{proof}

% Note that 
% $$
% \begin{aligned}
% \lim_{\beta\rightarrow 0}B(\beta,R_0,R)&=  \frac{\sigma_s^{\prime\prime}(R)}{I_l(R_0)K_l^\prime(R)-K_l(R_0)I_l^\prime(R)},  \\
% \lim_{\beta\rightarrow\infty}B(\beta,R_0,R)&=
% \frac{\sigma_s^\prime(R)}{I_l(R_0)K_l(R)-K_l(R_0)I_l(R)}.
% \end{aligned}
% $$

% By the maximum principle, we have $Q_{l}(r) \geq 0$.

\begin{lemma}\label{lemma:GG'}
For all $r\in(R_0,R]$, $\displaystyle a_l(r):=\frac{G_{\beta}^{\prime}(r;l)}{G_{\beta}(r;l)}-\frac{l}{r}$ satisfies
\begin{enumerate}
    \item $a_l(r)>0$ for all $l$.
    \item $a_l(r)$ is a decreasing sequence in $l$.
\end{enumerate}
\end{lemma}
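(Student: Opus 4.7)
My strategy is to handle the two parts with different tools: part~(1) by using the Bessel recurrences to rewrite $a_l$ in a manifestly positive form, and part~(2) by exploiting the modified Bessel ODE satisfied by $G_\beta(\cdot;l)$ to convert $a_l$ into a Riccati equation, from which comparison between consecutive indices reduces to a scalar linear first-order ODE for the difference.

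For \emph{part~(1)}, I would apply the identities
\[
I_l'(r) = \tfrac{l}{r}\, I_l(r) + I_{l+1}(r), \qquad K_l'(r) = \tfrac{l}{r}\, K_l(r) - K_{l+1}(r),
\]
together with the fact that the denominator of $G_\beta(r;l)$ is independent of $r$. The $l/r$ contributions in $a_l = G_\beta'/G_\beta - l/r$ then cancel exactly, leaving
\[
a_l(r) \;=\; \frac{K_l(R_0)\, I_{l+1}(r) + I_l(R_0)\, K_{l+1}(r)}{K_l(R_0)\, I_l(r) - I_l(R_0)\, K_l(r)}.
\]
The numerator is a sum of positive quantities. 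For the denominator, since $I_l$ is strictly increasing and $K_l$ strictly decreasing on $(0,\infty)$, the ratio $K_l/I_l$ is strictly decreasing, so $K_l(R_0)/I_l(R_0) > K_l(r)/I_l(r)$ for $r \in (R_0, R]$, which gives positivity of the denominator. Hence $a_l(r) > 0$.

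For \emph{part~(2)}, I would use that $y_l := G_\beta(\cdot;l)$ solves $y_l'' + y_l'/r - (1 + l^2/r^2)\, y_l = 0$. Writing $u_l = y_l'/y_l$ yields the Riccati $u_l' = 1 + l^2/r^2 - u_l/r - u_l^2$, and substituting $u_l = a_l + l/r$ simplifies to
\[
a_l'(r) \;=\; 1 - a_l^2(r) - \frac{2l+1}{r}\, a_l(r),
\]
with $a_l(r) \to +\infty$ as $r \to R_0^+$ (because $G_\beta(R_0;l) = 0$ while $G_\beta'(R_0;l) > 0$). Passing to the reciprocal $b_l := 1/a_l$, which by part~(1) is positive on $(R_0, R]$ with $b_l(R_0) = 0$, produces the regular Riccati
\[
b_l'(r) \;=\; 1 + \frac{2l+1}{r}\, b_l(r) - b_l^2(r).
\]
The difference $v := b_{l+1} - b_l$ then satisfies the linear ODE
\[
v'(r) \;=\; \Bigl[\frac{2l+1}{r} - b_{l+1}(r) - b_l(r)\Bigr] v(r) + \frac{2}{r}\, b_{l+1}(r), \qquad v(R_0) = 0,
\]
and since $b_{l+1} > 0$ on $(R_0, R]$, the variation-of-constants formula gives $v(r) > 0$, i.e.\ $b_{l+1}(r) > b_l(r)$, equivalently $a_{l+1}(r) < a_l(r)$.

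\textbf{Main obstacle.} The delicate point is the singular initial behavior $a_l(R_0^+) = +\infty$: one cannot compare $a_l$ and $a_{l+1}$ directly at $R_0$, so passing to the reciprocal variable $b_l$ is essential, and one must verify $b_l > 0$ throughout $(R_0, R]$ in order to treat $\tfrac{2}{r} b_{l+1}$ as a strictly positive forcing term. This positivity follows either from part~(1) or from the observation that $b_l' = 1 > 0$ at any putative interior zero, so $b_l$ cannot cross back to $0$. Once this is in place, the Gronwall/variation-of-constants conclusion is routine.
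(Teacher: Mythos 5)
Your proof is correct. Part (1) is essentially identical to the paper's: both rewrite $\frac{l}{r}G_\beta - G_\beta'$ using the downward recurrences $I_l' = \frac{l}{r}I_l + I_{l+1}$, $K_l' = \frac{l}{r}K_l - K_{l+1}$ to expose a manifestly sign-definite combination $K_l(R_0)I_{l+1}(r) + I_l(R_0)K_{l+1}(r)$, then divide by $G_\beta > 0$.

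For part (2) your route is genuinely different. The paper fixes $s\in(R_0,R]$ and introduces the auxiliary function
\[
\psi(r)=\frac{r}{s}G_\beta(r;l)-\frac{G_\beta(s;l)}{G_\beta(s;l+1)}G_\beta(r;l+1),
\]
shows it solves a two-point boundary-value problem with homogeneous data at $r=R_0$ and $r=s$ and a strictly negative source (this is where part (1) is used), applies the maximum principle to get $\psi<0$ on $(R_0,s)$, and then reads off the claimed inequality from $\psi'(s)>0$. You instead derive the Riccati flow $a_l' = 1 - a_l^2 - \frac{2l+1}{r}a_l$ directly from the modified Bessel ODE, pass to the reciprocal $b_l = 1/a_l$ to regularize the blow-up of $a_l$ at $R_0$, and observe that $v=b_{l+1}-b_l$ satisfies a first-order linear ODE with zero initial data and a strictly positive forcing $\frac{2}{r}b_{l+1}$, whence $v>0$ by variation of constants. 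Both arguments hinge on the same two pieces of input — positivity from part (1), and the fact that the pivot $R_0$ is a common zero of the $G_\beta(\cdot;l)$ — but you resolve the singularity at $R_0$ by inverting $a_l$ rather than by building a quotient $\psi$ that already vanishes there. Your version is arguably more self-contained (pure scalar ODE comparison, no elliptic maximum principle), and it produces the monotonicity for \emph{every} $r\in(R_0,R]$ in one sweep rather than pointwise in a parameter $s$; the paper's version has the advantage of fitting the same maximum-principle template used elsewhere in the section (e.g.\ Lemma 4.2 and Lemma 4.4). One small bookkeeping point worth stating explicitly in a final write-up: the Riccati for $b_l$ holds on the open interval $(R_0,R]$ where $a_l$ is finite and nonzero, and extends to $R_0$ by continuity of $b_l = G_\beta/(G_\beta' - \tfrac{l}{r}G_\beta)$; with that noted, the Gronwall/variation-of-constants step is airtight.
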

\begin{proof}
Fix any $s\in(R_0,R]$, define $$\displaystyle\psi(r)=\frac{r}{s}G_\beta(r;l)-\frac{G_\beta(s;l)}{G_\beta(s;l+1)}G_\beta(r;l+1),$$
which satisfies
$$
\left\{
\begin{aligned}
-\Delta \psi +\left(\frac{(l+1)^2}{r^2}+1\right)\psi&=\frac{2}{s}\left(\frac{l}{r}G_\beta(r;l)-G_\beta^{\prime}(r;l)\right),\\
\left.\psi\right|_{r=R_0}&=0,\\
\left.\psi\right|_{r=s}&=0.
\end{aligned}
\right.
$$
Note that
$$
\begin{aligned}
&\frac{l}{r}G_\beta(r;l)-G_\beta^{\prime}(r;l)\\&=-\frac{K_l(R_0)I_{l+1}(r)+I_l(R_0)K_{l+1}(r)}
{K_l(R_0)I_l^\prime(R)-I_l(R_0)K_l^\prime(R)+\beta(K_l(R_0)I_l(R)-I_l(R_0)K_l(R))}<0,
\end{aligned}
$$
which implies (1) using Lemma $\ref{lemma:G}$. Then using maximum principle we have $\psi(r)<0$ for $r\in(R_0,s)$. In particular, $\left.\psi^\prime(r)\right|_{r=s}>0$, which implies
$$
\frac{G_\beta^{\prime}(s;l)}{G_\beta(s;l)}-\frac{l}{s}
>
\frac{G_\beta^{\prime}(s;l+1)}{G_\beta(s;l+1)}-\frac{l+1}{s},
$$
thus we have (2) since $s\in(R_0,R]$ is arbitrarily fixed.
\end{proof}
\begin{lemma}\label{lemma:G'}For all $r\in[R_0,R]$, $\displaystyle b_l(r):=G_{\beta}^{\prime}(r;l)$ satisfies
\begin{enumerate}
    \item $b_l(r)>0$ for all $l$.
    \item $b_l(R_0)$ is a decreasing sequence in $l$.
    \item $b_l(R)$ is a increasing sequence in $l$.
\end{enumerate}

% For given $R_0>0, R$ is in a neighbor of $R_0$, namely, $R=R_0-\varepsilon$ for a small $\varepsilon, L_{2}(l, R, R_0)>0$ is decreasing with respect to $l$.
\end{lemma}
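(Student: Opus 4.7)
\textbf{Proof proposal for Lemma \ref{lemma:G'}.}

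The plan is to dispose of part (1) directly from the explicit formula and to handle parts (2) and (3) by a single comparison argument between $G_\beta(\cdot;l)$ and $G_\beta(\cdot;l+1)$ via the maximum principle and Hopf's lemma. For (1), differentiating the defining expression of $G_\beta$ gives
\[
b_l(r) = \frac{K_l(R_0)I_l'(r) - I_l(R_0)K_l'(r)}{K_l(R_0)I_l'(R) - I_l(R_0)K_l'(R) + \beta\bigl(K_l(R_0)I_l(R) - I_l(R_0)K_l(R)\bigr)},
\]
whose numerator is strictly positive because $I_l' > 0$, $K_l' < 0$ and $I_l, K_l > 0$, and whose denominator is positive by the sign analysis already carried out in the proof of Lemma \ref{lemma:G}.

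For parts (2) and (3), the key reformulation is that the explicit formula together with the Bessel identities shows that $\phi = G_\beta(\cdot;l)$ is the unique classical solution of the radial modified Bessel equation
\[
-\phi'' - \frac{\phi'}{r} + \left(\frac{l^2}{r^2}+1\right)\phi = 0, \qquad r\in[R_0,R],
\]
subject to $\phi(R_0)=0$ and the Robin-type normalization $\phi'(R)+\beta\phi(R)=1$ (both are immediate by direct substitution into the formula). Setting $u = G_\beta(\cdot;l)$, $v = G_\beta(\cdot;l+1)$, and $w = u - v$, subtracting the two ODEs and using $(l+1)^2 - l^2 = 2l+1$ yields
\[
Lw := -w'' - \frac{w'}{r} + \left(\frac{l^2}{r^2}+1\right) w = \frac{2l+1}{r^2}\,v,
\]
which is strictly positive on $(R_0,R]$ by Lemma \ref{lemma:G}, with boundary data $w(R_0)=0$ and $w'(R)+\beta w(R)=0$. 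Because the zero-order coefficient of $L$ is positive, the maximum principle forces $w\geq 0$ on $[R_0,R]$: an interior negative minimum would give $Lw<0$, while a negative value at $r=R$ combined with the Robin identity would force $w'(R)=-\beta w(R)>0$, contradicting the one-sided derivative test at a right-endpoint minimum. The strict positivity of the source then upgrades this to $w>0$ on $(R_0,R]$ via the strong maximum principle.

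Part (2) follows from Hopf's lemma at $r=R_0$, which forces $w'(R_0) > 0$, i.e., $b_l(R_0) > b_{l+1}(R_0)$. Part (3) follows directly from the Robin normalization: subtracting $u'(R)+\beta u(R)=1=v'(R)+\beta v(R)$ gives
\[
b_{l+1}(R) - b_l(R) = v'(R) - u'(R) = \beta\bigl(u(R)-v(R)\bigr) = \beta\, w(R) > 0.
\]
The step I expect to be most delicate is the maximum-principle argument itself, because the condition at $r=R$ is of Robin rather than Dirichlet type; however, the explicit sign of $\beta$ in the Robin identity is precisely what excludes a negative boundary minimum there, so no additional machinery beyond the standard strong maximum principle and Hopf's lemma is needed.
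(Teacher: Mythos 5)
Your proof is correct, and the mechanism is the same one the paper uses: compare $G_\beta(\cdot\,;l)$ across modes by forming an auxiliary function that satisfies the modified Bessel operator with a one-signed source, $w=0$ at $r=R_0$, and the Robin relation $w'(R)+\beta w(R)=0$ (which, as you note, comes from the normalization $G_\beta'(R)+\beta G_\beta(R)=1$), then apply the maximum principle plus Hopf. The one genuine difference is that the paper works with the \emph{continuous} derivative $F=\partial G_\beta/\partial l$, whereas you form the \emph{discrete} difference $w=G_\beta(\cdot\,;l)-G_\beta(\cdot\,;l+1)$. The continuous version is more compact but implicitly relies on the order-parameter smoothness of $I_\nu,K_\nu$ in $\nu$; your discrete comparison avoids that consideration entirely and also matches the style the paper itself adopts in the proof of Lemma~\ref{lemma:GG'}, which makes the two lemmas read more uniformly. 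Your treatment of the endpoint $r=R$ — ruling out a negative minimum there from the sign of $\beta$ in the Robin identity, and extracting (3) directly from $w'(R)=-\beta w(R)$ — is also somewhat crisper than the paper's phrasing, which invokes the maximum principle for $F'(R)>0$ in a way that is easier to follow after one has already established $F\le 0$.
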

\begin{proof}
% We rewrite \eqref{Ql} as
% $$
% \left\{\begin{array}{rll}
% -\Delta G_\beta+\left(\frac{l^{2}}{r^{2}}+1\right) G_\beta & =0 & \text { in } \Omega(t),\\
% G_\beta & =0 & \text { on } \Gamma_0, \\
% G_\beta & =  
% -\sigma_s^{\prime\prime}-\beta(\sigma_s^\prime+Q_l)
% % G_\beta(R) 
% & \text { on } \Gamma(t).
% \end{array}\right.
% $$
By Lemma \ref{lemma:G} we have  $0<G_\beta(r)<\frac{1}{\beta}\left(\frac{r}{R}\right)^{l}$ for $r\in(R_0,R)$, 
% also using strong maximum principle we have
and $G_\beta^{\prime}(R_0;l)>0.$
% $$\quad G_\beta^{\prime}(R ; l)>\frac{l}{\beta R}.
% $$
Let $F=\frac{\partial G_\beta}{\partial l}$, then
$$
\left\{\begin{aligned}
-\Delta F+\left(\frac{l^{2}}{r^{2}}+1\right) F &=-\frac{2 l}{r^{2}} G_\beta, 
% & & \text { in } B_{R} \backslash \bar{D}_{\rho} 
\\
\left.F\right|_{r=R_0} &=0, 
% & & \text { on } \partial B_{R} 
\\
\left.F\right|_{r=R} &= \left.\frac{\partial G_\beta}{\partial l}\right|_{r=R},
% & & \text { on } \partial D_{\rho}
\end{aligned}\right.
$$
and
$$
\left.\frac{\partial F}{\partial r}\right|_{r=R}
=
\left.\frac{\partial }{\partial r}
\frac{\partial G_\beta}{\partial l}\right|_{r=R}
=
\frac{\partial }{\partial l}
G_\beta^{\prime}(R)
=
\frac{\partial }{\partial l}(-\beta G_\beta(R)+1)
=\left.-\beta\frac{\partial G_\beta}{\partial l}\right|_{r=R},
$$
where we used the expression of $Q_l(r)=-(\sigma_s^{\prime\prime}(R)+\beta\sigma_s^\prime(R))
G_\beta(r)$ and its derivative with respect to $r$ at $r=R$, $Q_l^\prime(R)=-(\sigma_s^{\prime\prime}(R)+\beta\sigma_s^\prime(R))
G_\beta^\prime(R)$ and the boundary condition of $Q_l(R)$ in $(31)$ to obtain
$$
\begin{aligned}
   G_\beta^\prime(R) 
   &= \frac{-Q_l^\prime(R)}{\sigma_s^{\prime\prime}(R)+\beta\sigma_s^{\prime}(R)}\\
   &= \beta \frac{Q_l(R)}{\sigma_s^{\prime\prime}(R)+\beta\sigma_s^{\prime}(R)}+1\\
   &= -\beta G_\beta(R)+1.
\end{aligned}
$$
By the maximum principle, we have $\left.\frac{\partial F}{\partial r}\right|_{r=R}>0$, therefore with $\beta>0$ we have  $\left.\frac{\partial G_\beta}{\partial l}\right|_{r=R}< 0$.

Since $G_\beta(r)>0$ for $r\in(R_0,R)$, $F<0$ in $r\in(R_0,R)$. Thus, $\left.\frac{\partial F}{\partial r}\right|_{r=R_0}<0$ and $\left.\frac{\partial F}{\partial r}\right|_{r=R}>0$, \textit{i.e.}, $G_\beta^{\prime}(R_0; l)>0$ is a decreasing sequence in $l$, and $G_\beta^{\prime}(R ; l)$ is an increasing sequence in $l$.
\end{proof}
Similarly, notice that $$\Delta (p_{1}+(\mathcal{P}-\chi_\sigma) \sigma_{1})=0,$$ and by separation of variables, we have
$$
p_{1}+(\mathcal{P}-\chi_\sigma) \sigma_{1}=({D}_{1} r^{l} +{D}_{2} r^{-l}) \cos (l \theta),
$$
where ${D}_{1}$ and ${D}_{2}$ satisfy
$$
\left\{
\begin{aligned}
l\left(D_{1} R_{0}^{l-1}-D_{2}R_{0}^{-(l+1)}\right)&=\mathcal{P}
Q^\prime_l(R_0),\\
D_{1} R^{l}+D_2 R^{-l}&=\mathcal{G}^{-1} \frac{l^{2}-1}{R^{2}}-p_s^\prime(R)+\left(\mathcal{P}-\chi_{\sigma}\right)Q_l(R),
\end{aligned}\right.
$$
then we have
$$
\begin{aligned}  
D_{1}=&\frac{R_{0}^{l+1}}{R^{2 l}+R_{0}^{2 l}}\left(\frac{\mathcal{P}}{l}
Q^\prime_l(R_0)\right) +\frac{R^{l}}{R^{2 l}+R_{0}^{2 l}}\left(\mathcal{G}^{-1} \frac{l^{2}-1}{R^{2}}+(\mathcal{P}-\chi_\sigma)Q_l(R)-p_s^\prime(R)\right),
\\
D_{2}=&\frac{R_{0}^{l+1}R^{2l}}{R^{2 l}+R_{0}^{2 l}}\left(-\frac{\mathcal{P}}{l}
Q^\prime_l(R_0)\right)+\frac{R^{l}R_0^{2l}}{R^{2 l}+R_{0}^{2 l}}\left(\mathcal{G}^{-1} \frac{l^{2}-1}{R^{2}}+(\mathcal{P}-\chi_\sigma)Q_l(R)-p_s^\prime(R)\right).
\end{aligned}
$$

\begin{remark}
For $l=0$, we have $p_1+(\mathcal{P}-\chi_\sigma)\sigma_1=D$ (a constant) where $D=-\frac{\mathcal{G}^{-1}}{R^2}+(\mathcal{P-\chi_\sigma})Q_0(R)-p_s^\prime(R)$ by the boundary conditions of $p_1+(\mathcal{P}-\chi_\sigma)\sigma_1$ at $r=R$, which also imply $p_1=-\frac{\mathcal{G}^{-1}}{R^2}-p_s^\prime(R)$ and $\frac{\partial p_1}{\partial r}=-(\mathcal{P}-\chi_\sigma)\frac{\partial\sigma_1}{\partial r}=-(\mathcal{P}-\chi_\sigma)Q_0^\prime(R)$ for $l=0$.
\end{remark}

Thus
\begin{align}\notag
&D_1r^l+D_2r^{-l}\\
=&\ \notag 
\frac{\mathcal{P}}{l}\frac{R_0^{l+1}(r^l-R^{2l}r^{-l})}{R^{2 l}+R_{0}^{2 l}}
Q^\prime_l(R_0)\\
&\ +
\frac{R^l(r^l+R_0^{2l}r^{-l})}{R^{2 l}+R_{0}^{2 l}}
\left(\mathcal{G}^{-1} \frac{l^{2}-1}{R^{2}}+(\mathcal{P}-\chi_\sigma)Q_l(R)-p_s^\prime(R)\right),
\end{align}
and its derivative
\begin{align}\notag
&l(D_1r^{l-1}-D_2r^{-(l+1)})\\
=&\ \notag
\mathcal{P}\frac{R_0^{l+1}(r^{l-1}+R^{2l}r^{-(l+1)})}{R^{2 l}+R_{0}^{2 l}}
Q^\prime_l(R_0)
\\
&\ +
\frac{lR^l(r^{l-1}-R_0^{2l}r^{-(l+1)})}{R^{2 l}+R_{0}^{2 l}}
\left(\mathcal{G}^{-1} \frac{l^{2}-1}{R^{2}}+(\mathcal{P}-\chi_\sigma)Q_l(R)-p_s^\prime(R)\right).
\end{align}
Note that
$$
\lim_{R_0\rightarrow 0}
D_1r^l+D_2r^{-l}
=
\left(\mathcal{G}^{-1} \frac{l^{2}-1}{R^{2}}+(\mathcal{P}-\chi_\sigma)Q_l(R)-p_s^\prime(R)\right)
\left(\frac{r}{R}\right)^l
,
$$
and its derivative
$$
\lim_{R_0\rightarrow 0}
l(D_1r^{(l-1)}-D_2r^{-(l+1)})
=
\left(\mathcal{G}^{-1} \frac{l^{2}-1}{R^{2}}+(\mathcal{P}-\chi_\sigma)Q_l(R)-p_s^\prime(R)\right)
\left(\frac{r}{R}\right)^l
\frac{l}{r}
.
$$
\subsection{Justification for $\eqref{expansions}$}
\label{sect:Jus_exp}
In this subsection, we justify the validity of expansions in $\eqref{expansions}$ by showing that the $\mathcal{O}\left(\epsilon^{2}\right)$ terms are small. First, we introduce the following Banach space:
$$
\begin{aligned}
X^{l+\alpha}&=\left\{R_{1} \in C^{l+\alpha}: R_{1}\text{ is }2 \pi\text{-periodic}\right\}\\
X_{1}^{l+\alpha}&=\text{ closure of the linear space spanned}\\
&\text{by }\{\cos (j \theta), j=0,1,2, \ldots\}\text{ in }X^{l+\alpha}.
\end{aligned}
$$
% {\red Since we have included all modes, all even functions with period $2\pi$ can be expanded into a Fourier series, and the algebra property is automatically satisfied.

% To derive rigosous estimates, we start with }%Then, we have 
Since we have included all modes, all even functions with period $2\pi$ can be expanded into a Fourier series, and the algebra property is automatically satisfied.
To derive rigorous estimates, we start with
the following lemma:
% Lemma III.1. 
\begin{lemma}
If $R_{1} \in C^{3+\alpha}(\mathbb{R})$ and $(\sigma, p)$ is the solution of $\eqref{linpde}$, then
$$
\begin{aligned}
&\left\|\sigma-\sigma_{s}\right\|_{C^{3+\alpha}\left(\bar{\Omega}_{\varepsilon}\right)} \leq C|\varepsilon|\left\|R_{1}\right\|_{C^{3+\alpha}(\mathrm{R})}, \\
&\left\|p-p_{s}\right\|_{C^{1+\alpha}\left(\bar{\Omega}_{\varepsilon}\right)} \leq C|\varepsilon|\left\|R_{1}\right\|_{C^{3+\alpha}(\mathbb{R})},
\end{aligned}
$$
where $ {C}$ is a constant independent of $\varepsilon .$
\end{lemma}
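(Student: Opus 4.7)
The strategy is to reduce both estimates to standard Schauder bounds for second-order elliptic problems by studying the differences $w:=\sigma-\sigma_s$ and $q:=p-p_s$. Since the modified Bessel functions are analytic on $r>0$, the explicit formulas \eqref{nut_rad} and \eqref{ps_rad} extend $\sigma_s$ and $p_s$ smoothly to a neighborhood $[R_0,R+\delta]$ of the reference interval, so $w$ and $q$ are well defined on $\bar\Omega_\varepsilon$ for $\varepsilon$ small enough.

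First I would handle $w$. Because $\sigma_s$ satisfies $\Delta\sigma_s=\sigma_s$ on the extended radial interval, $w$ solves the homogeneous equation $\Delta w=w$ in $\Omega_\varepsilon$ with $w|_{\Gamma_0}=0$. On $\Gamma_\varepsilon$, a Taylor expansion of $\sigma_s$ and its normal derivative in $\varepsilon R_1$, together with the identity $\sigma_s'(R)+\beta(\sigma_s(R)-1)=0$ that kills the $O(1)$ term, gives
\[
\frac{\partial w}{\partial\mathbf{n}}+\beta w=-\varepsilon R_1\bigl(\sigma_s''(R)+\beta\sigma_s'(R)\bigr)+O(\varepsilon^{2})\quad\text{on }\Gamma_\varepsilon,
\]
so the Robin datum for $w$ has size $O(\varepsilon)\|R_1\|_{C^{2+\alpha}}$ in $C^{2+\alpha}(\Gamma_\varepsilon)$. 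Boundary Schauder estimates for the mixed Dirichlet--Robin problem on $\Omega_\varepsilon$ then yield $\|w\|_{C^{3+\alpha}(\bar\Omega_\varepsilon)}\le C\varepsilon\|R_1\|_{C^{3+\alpha}}$, which is the first inequality.

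Next I would do the same for $q$. Subtracting the equations and using the extended Poisson identity for $p_s$ produces $-\Delta q=(\mathcal{P}-\chi_\sigma)w$ in $\Omega_\varepsilon$, with Neumann data $\partial_{\mathbf{n}_0}q=\chi_\sigma\,\partial_{\mathbf{n}_0}w$ on $\Gamma_0$ of size $O(\varepsilon)$ (by the bound on $w$), and Dirichlet data $q=\mathcal{G}^{-1}\kappa|_{\Gamma_\varepsilon}-p_s|_{\Gamma_\varepsilon}$ on $\Gamma_\varepsilon$. The crucial observation is that $\kappa|_{\Gamma_\varepsilon}$ involves two derivatives of $R+\varepsilon R_1$, so the Dirichlet datum is only $O(\varepsilon)\|R_1\|_{C^{3+\alpha}}$ measured in $C^{1+\alpha}(\Gamma_\varepsilon)$. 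This regularity loss at the curvature term is exactly what forces the weaker $C^{1+\alpha}$ conclusion for $p-p_s$, and Schauder estimates for the mixed Neumann--Dirichlet problem with Dirichlet data in $C^{1+\alpha}$ close the argument.

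The hard part will be guaranteeing that the Schauder constant $C$ is uniform in $\varepsilon$ while the domain $\Omega_\varepsilon$ itself is varying. The cleanest way to handle this is to pull everything back to the fixed reference domain $\Omega$ via a boundary-straightening diffeomorphism
\[
\Phi_\varepsilon(r,\theta)=\bigl(r+\varepsilon\phi(r)R_1(\theta),\theta\bigr),
\]
where $\phi\in C^\infty([R_0,R])$ is a cutoff with $\phi(R_0)=0$ and $\phi(R)=1$. For $\varepsilon\|R_1\|_{C^{3+\alpha}}$ small, $\Phi_\varepsilon$ is a $C^{3+\alpha}$-diffeomorphism with $\|\Phi_\varepsilon-\mathrm{id}\|_{C^{3+\alpha}}=O(\varepsilon)\|R_1\|_{C^{3+\alpha}}$, so the pulled-back operators are uniform Hölder perturbations of $\Delta$ on the fixed $C^{3+\alpha}$-domain $\Omega$. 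Classical Schauder theory on $\Omega$ then applies with $\varepsilon$-independent constants, and transferring the resulting bounds back to $\bar\Omega_\varepsilon$ costs only a multiplicative factor $1+O(\varepsilon)$. The remaining work is routine bookkeeping of the Taylor remainders.
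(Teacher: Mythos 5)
Your plan matches the paper's proof: subtract the explicitly extended radial solutions, derive homogeneous elliptic problems for the differences $w=\sigma-\sigma_s$ and $q=p-p_s$ with $O(\varepsilon)$ boundary data (using the Robin identity $\sigma_s'(R)+\beta(\sigma_s(R)-1)=0$ to cancel the zeroth-order term, and attributing the $C^{1+\alpha}$ loss for $q$ to the two $\theta$-derivatives in the curvature), and close with Schauder estimates. Your two refinements are sound and actually tighten points the paper glosses over: you write the Neumann condition for $q$ on $\Gamma_0$ as $\chi_\sigma\,\partial_{\mathbf{n}_0}w$, which is $O(\varepsilon)$ once the estimate on $w$ is in hand, rather than the exact zero the paper's display \eqref{p-ps} asserts; and you make the $\varepsilon$-uniformity of the Schauder constants explicit via the pull-back diffeomorphism $\Phi_\varepsilon$ to the fixed reference annulus, whereas the paper only remarks that $\Gamma_\varepsilon\in C^{3+\alpha}$ and relies on that remark for the uniformity.
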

% {\red Notice that even though we solved our equation for $\sigma_s$ on the domain $\Omega$, the solution $\sigma_s$ is given by an explicit formula and therefore is well defined in the entire space and the above estimates make sense.}
Notice that even though we solved our equation for $\sigma_s$ on the domain $\Omega$, the solution $\sigma_s$ is given by an explicit formula and therefore is well defined in the entire space and the above estimates make sense.
% Proof. 
\begin{proof}
First, we derive the equation of $\sigma-\sigma_{s}$ below
\begin{equation}\label{sig-sigs}
\left\{
\begin{array}{ccc}
\begin{aligned}
\Delta\left(\sigma-\sigma_{s}\right)-\left(\sigma-\sigma_{s}\right)&=0 & \text { in } \Omega_{\varepsilon} ,
\\ \sigma-\sigma_{s}&=0 & \text { on } \Gamma_{0},
\\ {
% \green
\frac{\partial\left(\sigma-\sigma_{s}\right)}{\partial \mathbf{n}}+\beta\left(\sigma-\sigma_{s}\right) }& {
% \green
=g_{1}+\beta g_{2} }& \text { on } \Gamma_{\varepsilon}.
\end{aligned}
\end{array}
\right.
\end{equation}

% 把绿色换成如下：
Clearly,
\begin{eqnarray*}
&&\hspace{-2em}   g_1+\beta g_2  \\
 % \hspace{-2em} 
&=&\frac{\partial\sigma\left(R+\varepsilon R_{1},\theta\right)}{\partial\mathbf{n}}-\frac{\partial\sigma_s\left(R+\varepsilon R_{1}\right)}{\partial\mathbf{n}} +\beta\left[\sigma(R+\varepsilon R_{1},\theta)-\sigma_{s}(R+\varepsilon R_{1})\right] \\
&=&\frac{\partial\sigma_s\left(R \right)}{\partial r} -\frac{\partial\sigma_s\left(R+\varepsilon R_{1}\right)}{\partial\mathbf{n}} +\beta\left[\sigma_s(R)-\sigma_{s}(R+\varepsilon R_{1})\right] .
\end{eqnarray*}
Since $\sigma_s$ is given explicitly, after differentiating in $\theta$ two times, we find that the $C^{2+\alpha}$ norm of the right-hand side
of the above expression is clearly bounded by $ \|\varepsilon R_1\|_{C^{3+\alpha}}$.
% 下面两式子和绿色就不需要了

% {\green

% From $\eqref{linpde}$, we have
% \begin{equation}\label{g1}
% \begin{aligned}
% g_{1} &=\frac{\partial\sigma\left(R+\varepsilon R_{1}\right)}{\partial\mathbf{n}}-\frac{\partial\sigma_s\left(R+\varepsilon R_{1}\right)}{\partial\mathbf{n}} \\
% &=\sigma_{s}^\prime(R)+\varepsilon\left( R_{1} \sigma_s^{\prime \prime}(R)+ \left.\frac{\partial\sigma_1}{\partial r}\right|_{r=R}\right)-\sigma_{s}^\prime\left(R\right)-\varepsilon R_{1}\sigma_s^{\prime\prime}(R)+\mathcal{O}\left(\varepsilon^{2}\right) \\
% &=\varepsilon \left.\frac{\partial\sigma_1}{\partial r}\right|_{r=R}+\mathcal{O}\left(\varepsilon^{2}\right) .
% \end{aligned}
% \end{equation}
% and
% \begin{equation}\label{g2}
% \begin{aligned}
% g_{2} &=\sigma\left(R+\varepsilon R_{1}\right)-\sigma_{s}\left(R+\varepsilon R_{1}\right) \\
% &=\sigma_{s}(R)+\varepsilon\left( R_{1} \sigma_s^\prime(R)+ \left.\sigma_{1}\right|_{r=R}\right)-\sigma_{s}(R)-\varepsilon R_{1} \sigma_s^\prime(R)+\mathcal{O}\left(\varepsilon^{2}\right)\\
% &=\varepsilon \left.\sigma_{1}\right|_{r=R}+\mathcal{O}\left(\varepsilon^{2}\right) .
% \end{aligned}
% \end{equation}
% By differentiating three times with respect to $\theta$, we obtain
% $$
% \left\|\sigma-\sigma_{s}\right\|_{C^{3+\alpha}\left(\Gamma_{\varepsilon}\right)} \leq C|\varepsilon|\left\|R_{1}\right\|_{C^{3+\alpha}(\mathbb{R})} .
% $$
% }
The Schauder estimates then indicate that
$$
\left\|\sigma-\sigma_{s}\right\|_{C^{3+\alpha}\left(\tilde{\Omega}_{\varepsilon}\right)} \leq C|\varepsilon|\left\|R_{1}\right\|_{C^{3+\alpha}(\mathbb{R})} .
$$
Since $\Gamma_{\varepsilon} \in C^{3+\alpha}$, the constant $C$ is independent of $\varepsilon$.

Similarly, the equation of $p-p_{s}$ reads as
\begin{equation}\label{p-ps}
\left\{
\begin{array}{ccc}
\begin{aligned}-\Delta\left(p-p_{s}\right)&=\left(\mathcal{P}-\chi_\sigma\right)\left(\sigma-\sigma_{s}\right) & \text { in } \Omega_{\varepsilon}, \\ 
\frac{\partial\left(p-p_{s}\right)}{\partial \mathbf{n}}&=0 & \text { on } \Gamma_{0}, \\
p-p_{s}&=g_{3} & \text { on } \Gamma_{\varepsilon},
\end{aligned}
\end{array}
\right.
\end{equation}

where
$$
\begin{aligned}
g_{3} &=p\left(R+\varepsilon R_{1}\right)-p_{s}\left(R+\varepsilon R_{1}\right)\\
&=\left.\mathcal{G}^{-1} \kappa\right|_{r=R+\varepsilon R_{1}}-p_{s}\left(R+\varepsilon R_{1}\right) \\
&=\frac{\mathcal{G}^{-1}}{R}-\mathcal{G}^{-1} \frac{\varepsilon}{R^{2}}\left(R_{1}+R_{1 \theta \theta}\right)+\mathcal{O}\left(\varepsilon^{2}\right)-p_{s}\left(R+\varepsilon R_{1}\right) \\
&=p_{s}(R)-p_{s}\left(R+\varepsilon R_{1}\right)-\mathcal{G}^{-1} \frac{\varepsilon}{R^{2}}\left(R_{1}+R_{1 \theta \theta}\right)+\mathcal{O}\left(\varepsilon^{2}\right)\\
&=\varepsilon R_1 \frac{\mathcal{G}^{-1}}{R^2}-\mathcal{G}^{-1} \frac{\varepsilon}{R^{2}}\left(R_{1}+R_{1 \theta \theta}\right)+\mathcal{O}\left(\varepsilon^{2}\right)\\
&=- \varepsilon\frac{\mathcal{G}^{-1}}{R^{2}}R_{1 \theta \theta}+\mathcal{O}\left(\varepsilon^{2}\right).\\
\end{aligned}
$$
We differentiate the above equation along $\Gamma_{\varepsilon}$ and get
$$
\left\|p-p_{s}\right\|_{C^{1+\alpha}\left(\Gamma_{\varepsilon}\right)} \leq C|\varepsilon|\left\|R_{1}\right\|_{C^{3+\alpha}(\mathbb{R})} .
$$
The Schauder estimates imply
$$
\begin{aligned}
\left\|p-p_{s}\right\|_{C^{1+\alpha}\left(\bar{\Omega}_{\varepsilon}\right)}
&\leq C\left\|\sigma-\sigma_{s}\right\|_{C^{\alpha}\left(\bar{\Omega}_{\varepsilon}\right)}+C\left\|p-p_{s}\right\|_{C^{1+\alpha}\left(\Gamma_{\varepsilon}\right)}\\
&\leq C|\varepsilon|\left\|R_{1}\right\|_{C^{3+\alpha}(\mathbb{R})}.
\end{aligned}
$$
Due to the regularity of $\sigma_{s}$ and $p_{s}$ and $\Gamma_{\varepsilon} \in C^{3+\alpha}$, we conclude the constant $C$ is independent of $\varepsilon$.
\end{proof}

Next, we proceed to rigorously establish $\eqref{expansions}$. Notice that %Since 
both $\sigma$ and $p$ are defined on $\Omega_{\varepsilon}$, but unlike $\sigma_s$, the first order expansion terms $\sigma_{1}$ and $p_{1}$ are defined on $\Omega$ only, we need to transform $\sigma_{1}$ and $p_{1}$ to $\Omega_{\varepsilon}$ by Hanzawa transformation $H_{\varepsilon}$,
$$
(r, \theta)=H_{\varepsilon}\left(r^{\prime}, \theta^{\prime}\right)=\left(r^{\prime}+\chi\left(r^{\prime}-R\right) \varepsilon R_{1}, \theta^{\prime}\right),
$$
where
$$
\chi \in C^{\infty}, \quad \chi(z)=\left\{\begin{array}{ll}
0 & \text { if }|z| \geq \frac{3}{4} \delta_{0}, \\
1 & \text { if }|z|<\frac{1}{4} \delta_{0},
\end{array}\left|\frac{d^{k} \chi}{d z^{k}}\right| \leq \frac{C}{\delta_{0}^{k}},\right.
$$
and $\delta_{0}>0$ is small. Noticing that $H_{\varepsilon}$ maps $\Omega$ onto $\Omega_{\varepsilon}$ but keeps the annulus $\left\{r: R_0 \leq r \leq R-\frac{3}{4} \delta_{0}\right\}$ fixed, we set

\begin{equation}\label{Hanzawa}
\tilde{\sigma}_{1}(r, \theta)=\sigma_{1}\left(H_{\varepsilon}^{-1}(r, \theta)\right), \quad \tilde{p}_{1}(r, \theta)=p_{1}\left(H_{\varepsilon}^{-1}(r, \theta)\right).    
\end{equation}

Then, we establish the following estimates.

% Theorem III.1. 
\begin{theorem}
If $R_{1} \in C^{3+\alpha}(\mathbb{R}),(\sigma, p)$ is the solution of \eqref{linpde}, and $\left(\sigma_{1}, p_{1}\right)$ is defined as $\eqref{Hanzawa}$, then
$$
\begin{aligned}
\left\|\sigma-\sigma_{s}-\varepsilon \tilde{\sigma}_{1}\right\|_{C^{3+\alpha}\left(\bar{\Omega}_{\varepsilon}\right)} &\leq C|\varepsilon|^{2}\left\|R_{1}\right\|_{C^{3+\alpha}(\mathbb{R})} \\
\left\|p-p_{s}-\varepsilon \tilde{p}_{1}\right\|_{C^{1+\alpha}\left(\bar{\Omega}_{\varepsilon}\right)} &\leq C|\varepsilon|^{2}\left\|R_{1}\right\|_{C^{3+\alpha}(\mathbb{R})} .
\end{aligned}
$$
\end{theorem}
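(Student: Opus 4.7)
The plan is to set up error equations for $w_\sigma := \sigma - \sigma_s - \varepsilon\tilde\sigma_1$ and $w_p := p - p_s - \varepsilon\tilde p_1$ on $\Omega_\varepsilon$, show that each is the solution of a linear elliptic boundary value problem whose forcing terms and boundary data are of order $\varepsilon^2\|R_1\|_{C^{3+\alpha}}$, and then invoke Schauder estimates. The previous lemma already gives the $O(\varepsilon)$ bounds needed to close estimates on coupling terms, and the computations in the derivation of the linearized system \eqref{linearized} supply all the cancellations that promote the residuals from $O(\varepsilon)$ to $O(\varepsilon^2)$.

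First I would analyze $w_\sigma$. Since $\tilde\sigma_1 = \sigma_1\circ H_\varepsilon^{-1}$ and $H_\varepsilon$ is the identity outside the collar $\{R-\tfrac{3}{4}\delta_0 \le r \le R+\varepsilon R_1\}$, the pulled-back Laplacian reads $\Delta\tilde\sigma_1 = \tilde\sigma_1 + \varepsilon f_H$ in $\Omega_\varepsilon$, where $f_H$ is a commutator involving $\chi'$, $\chi''$, $R_1$ and derivatives of $\sigma_1$, and is supported in the collar. Combined with $\Delta\sigma = \sigma$ and $\Delta\sigma_s = \sigma_s$ we get $\Delta w_\sigma - w_\sigma = -\varepsilon^2 f_H$ inside $\Omega_\varepsilon$. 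On $\Gamma_0$ the Hanzawa map is the identity, so $\tilde\sigma_1|_{\Gamma_0}=\sigma_1|_{\Gamma_0}=0$, hence $w_\sigma|_{\Gamma_0}=0$. On $\Gamma_\varepsilon$ I would Taylor expand $\sigma(R+\varepsilon R_1,\theta)$ and $\sigma_s(R+\varepsilon R_1)$ to second order and use the Robin boundary conditions satisfied by $\sigma$ and by $\sigma_1$ (the third line of \eqref{linearized}); the $O(\varepsilon)$ terms cancel by the very definition of the linearized boundary condition, leaving a residual bounded by $C\varepsilon^2\|R_1\|_{C^{3+\alpha}}$ in $C^{2+\alpha}(\Gamma_\varepsilon)$. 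Schauder estimates for the mixed Dirichlet/Robin problem on $\Omega_\varepsilon$, whose boundaries are uniformly in $C^{3+\alpha}$, then yield the first claimed bound.

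Next I would do the analogous computation for $w_p$. The Hanzawa transform produces $-\Delta\tilde p_1 = (\mathcal P-\chi_\sigma)\tilde\sigma_1 + \varepsilon g_H$ in $\Omega_\varepsilon$, so $-\Delta w_p = (\mathcal P-\chi_\sigma)w_\sigma - \varepsilon^2 g_H$ in $\Omega_\varepsilon$. On $\Gamma_0$, both $\partial_n p = \chi_\sigma \partial_n\sigma$ and $\partial_n p_s = \chi_\sigma \partial_n\sigma_s$ hold, and the linearized Neumann condition for $p_1$ forces $\partial_r w_p|_{\Gamma_0} = \chi_\sigma\partial_r w_\sigma|_{\Gamma_0}$. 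On $\Gamma_\varepsilon$, I would expand the Laplace--Young condition $p = \mathcal G^{-1}\kappa$ using the second-order expansion of $\kappa$ already computed in Section 4.1 and Taylor expand $p_s(R+\varepsilon R_1)$, and then subtract the linearized condition $R_1 p_s'(R) + p_1|_{r=R} = -\mathcal G^{-1}(R_1+R_{1\theta\theta})/R^2$ satisfied by $p_1$; after differentiating tangentially along $\Gamma_\varepsilon$ one gets $\|w_p\|_{C^{1+\alpha}(\Gamma_\varepsilon)}\le C\varepsilon^2\|R_1\|_{C^{3+\alpha}}$. Schauder estimates for the mixed Neumann/Dirichlet problem, using the already established $C^{3+\alpha}$ bound on $w_\sigma$ to control the right-hand side in $C^\alpha$, then give the second claimed bound.

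The main obstacle is the bookkeeping on $\Gamma_\varepsilon$: one must expand the curvature to order $\varepsilon^2$ (not merely $\kappa_0 + \varepsilon\kappa_1$ as was sufficient to derive \eqref{linearized}), expand the Robin condition to the same order, and verify that all order-$\varepsilon$ contributions are exactly what the definitions of $\sigma_1$ and $p_1$ absorb. A secondary difficulty is showing that the Hanzawa commutators $\varepsilon f_H$ and $\varepsilon g_H$, which are $O(\varepsilon)$ pointwise but involve derivatives of the cutoff $\chi$, produce source terms that are genuinely $O(\varepsilon^2)$ in $C^{1+\alpha}(\bar\Omega_\varepsilon)$ and $C^{\alpha}(\bar\Omega_\varepsilon)$ respectively; this follows because those commutators are linear in $\varepsilon R_1$ and are themselves multiplied by $\varepsilon$ in the equation for $w_\sigma$ after subtracting off $\varepsilon\tilde\sigma_1$, with the $\delta_0$-dependence of $\chi$ absorbed into the constant $C$.
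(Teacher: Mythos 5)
Your proposal follows essentially the same route as the paper: set up the error equations for $\sigma-\sigma_s-\varepsilon\tilde\sigma_1$ and $p-p_s-\varepsilon\tilde p_1$, observe that the Hanzawa commutator produces an interior source of size $O(\varepsilon^2)$, show that the $O(\varepsilon)$ boundary contributions cancel exactly by the definitions of $\sigma_1$ and $p_1$ in \eqref{linearized}, and close with Schauder estimates on the mixed Robin/Dirichlet and Neumann/Dirichlet problems. (A minor point in your favor: the paper writes $\partial_r\psi=0$ on $\Gamma_0$, whereas the correct residual Neumann condition is $\partial_r w_p=\chi_\sigma\partial_r w_\sigma$ there as you wrote, which is harmless since $\partial_r w_\sigma|_{\Gamma_0}=O(\varepsilon^2)$ anyway.)
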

% Proof. 
\begin{proof}
First, we compute the first and second derivatives of $\tilde{\sigma}_{1}$ with respect to both $r$ and $\theta$ :
$$
\begin{aligned}
\frac{\partial \tilde{\sigma}_{1}}{\partial r}&=\frac{\partial \sigma_{1}}{\partial r^{\prime}} \frac{\partial r^{\prime}}{\partial r}, \quad \frac{\partial \tilde{\sigma}_{1}}{\partial \theta}=\frac{\partial \sigma_{1}}{\partial r^{\prime}} \frac{\partial r^{\prime}}{\partial \theta}+\frac{\partial \sigma_{1}}{\partial \theta^{\prime}} \\
\frac{\partial^{2} \tilde{\sigma}_{1}}{\partial r^{2}}&=\frac{\partial^{2} \sigma_{1}}{\partial r^{2}}\left(\frac{\partial r^{\prime}}{\partial r}\right)^{2}+\frac{\partial \sigma_{1}}{\partial r^{\prime}} \frac{\partial^{2} r^{\prime}}{\partial r^{2}} \\
\frac{\partial^{2} \tilde{\sigma}_{1}}{\partial \theta^{2}}&=\frac{\partial^{2} \sigma_{1}}{\partial \theta^{\prime 2}}
+2 \frac{\partial^{2} \sigma_{1}}{\partial r^{\prime} \partial \theta^{\prime}} \frac{\partial r^{\prime}}{\partial \theta}
+\frac{\partial^{2} \sigma_{1}}{\partial r^{\prime 2}}\left(\frac{\partial r^{\prime}}{\partial \theta}\right)^{2}+\frac{\partial \sigma_{1}}{\partial r^{\prime}} \frac{\partial^{2} r^{\prime}}{\partial \theta^{2}}
\end{aligned}
$$
where the derivatives of $r^{\prime}$ is derived by the Hanzawa transformation. In fact, the first derivatives are
$$
\begin{aligned}
&1=\frac{\partial r^{\prime}}{\partial r}+\varepsilon R_{1} \chi^{\prime}\left(r^{\prime}-R\right) \frac{\partial r^{\prime}}{\partial r}, \\
&0=\frac{\partial r^{\prime}}{\partial \theta}+\varepsilon R_{1} \chi^{\prime}\left(r^{\prime}-R\right) \frac{\partial r^{\prime}}{\partial \theta}+\varepsilon \chi\left(r^{\prime}-R\right) R_{1 \theta};
\end{aligned}
$$
thus,
$$
\frac{\partial r^{\prime}}{\partial r}=\frac{1}{1+\varepsilon R_{1} \chi^{\prime}\left(r^{\prime}-R\right)} \text { and } \frac{\partial r^{\prime}}{\partial \theta}=-\frac{\varepsilon \chi\left(r^{\prime}-R\right) R_{1 \theta}}{1+\varepsilon R_{1} \chi^{\prime}\left(r^{\prime}-R\right)} .
$$
Similarly, we obtain the second derivatives below
$$
\begin{aligned}
\frac{\partial^{2} r^{\prime}}{\partial r^{2}}=&-\frac{\varepsilon R_{1} \chi^{\prime \prime}\left(r^{\prime}-R\right)}{\left(1+\varepsilon R_{1} \chi^{\prime}\left(r^{\prime}-R\right)\right)^{2}} \frac{\partial r^{\prime}}{\partial r}=-\frac{\varepsilon R_{1} \chi^{\prime \prime}\left(r^{\prime}-R\right)}{\left(1+\varepsilon R_{1} \chi^{\prime}\left(r^{\prime}-R\right)\right)^{3}}, \\
\frac{\partial^{2} r^{\prime}}{\partial \theta^{2}}=&-\frac{\varepsilon \chi\left(r^{\prime}-R\right) R_{1 \theta \theta}}{1+\varepsilon R_{1} \chi^{\prime}\left(r^{\prime}-R\right)}+2 \frac{\varepsilon^{2} \chi\left(r^{\prime}-R\right) \chi^{\prime}\left(r^{\prime}-R\right) R_{1 \theta}^{2}}{\left(1+\varepsilon R_{1} \chi^{\prime}\left(r^{\prime}-R\right)\right)^{2}} \\
&-\frac{\left(\chi\left(r^{\prime}-R\right) \varepsilon R_{1 \theta}\right)^{2} \chi^{\prime \prime}\left(r^{\prime}-R\right) \varepsilon R_{1}}{\left(1+\varepsilon R_{1} \chi^{\prime}\left(r^{\prime}-R\right)\right)^{3}}.
\end{aligned}
$$

Next, we consider the estimate of $\phi=\sigma-\sigma_{s}-\varepsilon \tilde{\sigma}_{1}$, which satisfies
$$
\left\{
\begin{array}{ccc}
\begin{aligned}
\Delta \phi- \phi&=\varepsilon^{2} \tilde{f} & \text { in } \Omega_{\varepsilon}, \\ 
\phi&=0 & \text { on } \Gamma_{0}, \\
\frac{\partial \phi}{\partial \bf n}+\beta\phi&=g_4+\beta g_5 & \text { on } \Gamma_{\varepsilon},
\end{aligned}
\end{array}\right.
$$
where $\tilde{f}$ depends on various terms of the Hanzawa transformation above and involves up to second-order derivatives of $R_{1}$ and $\sigma_{1}$. By applying the Schauder estimate to $\eqref{linearized}$, we know $\sigma_{1} \in C^{3+\alpha}$ and
$$
\left\|\tilde{f}\right\|_{C^{1+\alpha}\left(\Omega_{\varepsilon}\right)} \leq C\left\| R_{1} \right\|_{C^{3+\alpha}(\mathbb{R})} .
$$

On the boundary $\Gamma_{\varepsilon}$, 
% {\red 
% 把绿色换成如下：
clearly we have,
\begin{eqnarray*}
g_4+\beta g_5
 % \hspace{-2em} 
&=&\frac{\partial\sigma\left(R+\varepsilon R_{1},\theta\right)}{\partial\mathbf{n}}-\frac{\partial\sigma_s\left(R+\varepsilon R_{1}\right)}{\partial\mathbf{n}} 
-\varepsilon\left.\frac{\partial\tilde\sigma_1}{\partial\mathbf{n}}\right|_{r=R+\varepsilon R_{1}}\\
&&+\beta\left[\sigma(R+\varepsilon R_{1},\theta)-\sigma_{s}(R+\varepsilon R_{1})
-\varepsilon \left.\tilde{\sigma}_{1}\right|_{r=R+\varepsilon R_{1}}
\right] 
\\
&=&\frac{\partial\sigma_s\left(R \right)}{\partial r} -\frac{\partial\sigma_s\left(R+\varepsilon R_{1}\right)}{\partial\mathbf{n}} +\beta\left[\sigma_s(R)-\sigma_{s}(R+\varepsilon R_{1})\right]\\
&&-\varepsilon\left.\frac{\partial\tilde\sigma_1}{\partial\mathbf{n}}\right|_{r=R+\varepsilon R_{1}}
-\varepsilon \beta\left.\tilde{\sigma}_{1}\right|_{r=R+\varepsilon R_{1}}
.
\end{eqnarray*}
% {\green 上式子漏掉了（由Hanzawa $\tilde \sigma_1$ ）$\varepsilon^2$项 }
Substituting the boundary condition of $\sigma_1$ from $\eqref{linearized}$ into the above equation, we obtain an expression involving $\sigma_s$ 
and $\varepsilon R_1$ only. Since $\sigma_s$ is given explicitly, after differentiating in $\theta$ two times, we find that the $C^{2+\alpha}$ norm of the right-hand side
of the above expression is clearly bounded by $ \|\varepsilon^2 R_1\|_{C^{3+\alpha}}$.
% 下面两式子和绿色就不需要了
% }

\void{\green
by \eqref{g1} we have
$$
\begin{aligned}
g_4 &=\frac{\partial\sigma\left(R+\varepsilon R_{1}\right)}{\partial\mathbf{n}}-\frac{\partial\sigma_s\left(R+\varepsilon R_{1}\right)}{\partial\mathbf{n}}-\varepsilon\left.\frac{\partial\tilde\sigma_1}{\partial\mathbf{n}}\right|_{r=R+\varepsilon R_{1}} \\
&=\varepsilon \left.\frac{\partial\sigma_1}{\partial r}\right|_{r=R}-\varepsilon \left.\frac{\partial\tilde\sigma_1}{\partial r}\right|_{r=R+\varepsilon R_1}+\mathcal{O}\left(\varepsilon^{2}\right)\\
&=\mathcal{O}(\varepsilon^2)R_1,
\end{aligned}
$$
and by $\eqref{g2}$ we have
$$
\begin{aligned}
g_5 &=\sigma\left(R+\varepsilon R_{1}\right)-\sigma_{s}\left(R+\varepsilon R_{1}\right)-\varepsilon \left.\tilde{\sigma}_{1}\right|_{r=R+\varepsilon R_{1}} \\
&=\varepsilon \left.\sigma_1\right|_{r=R}-\varepsilon \left.\tilde{\sigma}_{1}\right|_{r=R+\varepsilon R_{1}}\\
&=\mathcal{O}\left(\varepsilon^{2}\right) R_{1}.
\end{aligned}
$$
}

By the Schauder theory, we obtain
$$
\left\|\sigma-\sigma_{s}-\varepsilon \tilde{\sigma}_{1}\right\|_{C^{3+\alpha}\left(\bar{\Omega}_{\varepsilon}\right)} \leq C|\varepsilon|^{2}\left\|R_{1}\right\|_{C^{3+\alpha}(\mathbb{R})}.
$$
Similarly, we write the equation of $\psi=p-p_{s}-\varepsilon \tilde{p}_{1}$ as follows:
\begin{equation}
\left\{
\begin{array}{ccc}
\begin{aligned}-\Delta \psi&=(\mathcal{P}-\chi_\sigma) \phi+\varepsilon^{2} \tilde{k} & \text { in } \Omega_{\varepsilon}, \\
\frac{\partial \psi}{\partial r}&=0 & \text { on } \Gamma_{0},\\
\psi&=f & \text { on } \Gamma_{\varepsilon}, 
\end{aligned}
\end{array}
\right.
\end{equation}
where $\tilde{k}$ is based on various term of the Hanzawa transformation above and follows:
$$
\|\tilde{k}\|_{C^{1+\alpha}\left(\bar{\Omega}_{\varepsilon}\right)} \leq C\left\|R_{1}\right\|_{C^{3+\alpha}(\mathbb{R})} .
$$
Since
$$
f=p\left(R+\varepsilon R_{1} , \theta\right)-p_{s}\left(R+\varepsilon R_{1}\right)-\varepsilon \tilde{p}_{1}\left(R+\varepsilon R_{1}, \theta\right),
$$
we have
$$
\|f\|_{C^{1+\alpha}(\mathbb{R})} \leq C|\varepsilon|^{2}\left\|R_{1}\right\|_{C^{3+\alpha}(\mathbb{R})} .
$$
Therefore, by the Schauder estimate, we conclude
$$
\left\|p-p_{s}-\varepsilon \tilde{p}_{1}\right\|_{C^{1+\alpha}\left(\bar{\Omega}_{\varepsilon}\right)} \leq C|\varepsilon|^{2}\left\|R_{1}\right\|_{C^{3+\alpha}(\mathbb{R})} .
$$
\end{proof}

\subsection{Bifurcation analysis}
We consider the nonlinear function $F$ defined in \eqref{bifurcation} by expanding $\frac{\partial p}{\partial r}$ on $\Gamma_{\varepsilon}$, namely,
$$
\begin{aligned}
F&\left(R_{1}, \mathcal{P}\right)\\&=-\left.\frac{\partial p}{\partial r} \right|_{\Gamma_{\varepsilon}}+\chi_\sigma\left.\frac{\partial \sigma}{\partial r} \right|_{\Gamma_{\varepsilon}}\\
&=-\varepsilon\left(\left.\frac{\partial p_1}{\partial r}\right|_{r=R}+p_s^{\prime\prime}(R)R_1-\chi_\sigma\left(\left.\frac{\partial\sigma_1}{\partial r}\right|_{r=R}+\sigma_s^{\prime\prime}(R)R_1\right)\right)+\mathcal{O}\left(|\varepsilon|^{2}\right).
\end{aligned}
$$
Thus, $F$ maps $\left(R_{1},\mathcal{P}\right)$ from $X^{l+3+\alpha}$ to $X^{l+\alpha}$ and is bounded for any $l \geq 0$. Furthermore, $F$ is Fréchet differentiable and the Fréchet derivative at $(0, \mathcal{P})$ is given by
$$
\left[\frac{\partial F}{\partial R_{1}}(0, \mathcal{P})\right] \cos (l \theta)
=\chi_\sigma\left(\left.\frac{\partial\sigma_1}{\partial r}\right|_{r=R}+\sigma_s^{\prime\prime}(R)R_1\right)-\left(\left.\frac{\partial p_1}{\partial r}\right|_{r=R}+p_s^{\prime\prime}(R)R_1\right).
$$
Then, the bifurcation condition becomes
$$
F(\mathcal{P}):=
\left.\frac{\partial p_1}{\partial r}\right|_{r=R}+p_s^{\prime\prime}(R)R_1
-
\chi_\sigma\left(\left.\frac{\partial\sigma_1}{\partial r}\right|_{r=R}+\sigma_s^{\prime\prime}(R)R_1\right)
=0.
$$
Since for $l\ge 0$,
$$
\begin{aligned}
\left.\frac{\partial p_{1}}{\partial r}\right|_{r=R}&=\left[-(\mathcal{P}-\chi_\sigma) Q_l^{\prime}(R)+l {D}_{1} R^{l-1}-l {D}_{2} R^{-l-1}\right]\cos (l\theta),\\
\left.\frac{\partial \sigma_{1}}{\partial r}\right|_{r=R}&=Q_l^{\prime}(R)\cos (l\theta),
\end{aligned}
$$
and
$$
\begin{aligned}
p_{s}^{\prime\prime}(R)+\frac{1}{R}p_{s}^{\prime}(R)&=\mathcal{P}\mathcal{A}- (\mathcal{P}-\chi_\sigma)\sigma_{s}(R),\\
\sigma_{s}^{\prime\prime}(R)+\frac{1}{R}\sigma_{s}^{\prime}(R)&=\sigma_s(R),\\
p_s^\prime(R)&=\chi_\sigma \sigma_s^\prime(R),
\end{aligned}
$$
we obtain
$$\small
\begin{aligned}
F(\mathcal{P})
=&\ \mathcal{P}(\mathcal{A}-\sigma_s(R)-Q_l^\prime(R))+l {D}_{1} R^{l-1}-l {D}_{2} R^{-l-1} \\
=&\ \mathcal{P}\left(2\frac{R\sigma_s^\prime(R)-R_0\sigma_s^\prime(R_0) }{R^2-R_0^2}-\sigma_s(R)-Q_l^\prime(R)+\frac{2R_{0}^{l+1}R^{l-1}}{R^{2 l}+R_{0}^{2 l}}Q_l^\prime(R_0)\right)\\
\\
&\ +\frac{l(R^{2l-1}-R_0^{2l}R^{-1})}{R^{2 l}+R_{0}^{2 l}}\left(\mathcal{G}^{-1} \frac{l^{2}-1}{R^{2}}+(\mathcal{P}-\chi_\sigma)Q_l(R)-p_s^\prime(R)\right)\\
=
&\ \mathcal{P}\left(\frac{2}{R}\frac{\sigma_s^\prime(R)-\frac{R_0}{R}\sigma_s^\prime(R_0) }{1-\left(\frac{R_0}{R}\right)^2}-\sigma_s(R)-Q_l^\prime(R)+\frac{l}{R}\frac{1-\left(\frac{R_0}{R}\right)^{2l}}{1 +\left(\frac{R_{0}}{R}\right)^{2l}}Q_l(R)\rule{0pt}{6ex}\right.\\  &\hspace{2em} \left. +\frac{Q_l^\prime(R_0)}{\frac{1}{2}\left(\left(\frac{R}{R_0}\right)^{l+1}+\left(\frac{R_0}{R}\right)^{l-1}\right)}\right)\\
&\ +\frac{l}{R}\frac{1-\left(\frac{R_0}{R}\right)^{2l}}{1 +\left(\frac{R_{0}}{R}\right)^{2 l}}\left(\mathcal{G}^{-1} \frac{l^{2}-1}{R^{2}}-\chi_\sigma(Q_l(R)+\sigma_s^\prime(R))\right)
\\
=
&\ 0 .
\end{aligned}
$$\normalsize
%\newpage
Therefore, the formula of $\mathcal{P}_{l}$ for bifurcation points is
\begin{equation}\label{bifurcation_point}
    \mathcal{P}_{l}=\frac{L_{1}(l, R, R_0)}{L_{2}(l, R, R_0)},
\end{equation}
where
$$
\begin{aligned}
L_{1}(l, R, R_0)=&\frac{l}{R}\overbrace{\frac{1-\left(\frac{R_0}{R}\right)^{2l}}{ 1+
\left(\frac{R_{0}}{R}\right)^{2 l}}}^\text{Necrosis (I)}\left(\overbrace{\mathcal{G}^{-1} \frac{l^{2}-1}{R^{2}}}^\text{Surface tension}-\overbrace{\chi_\sigma(Q_l(R)+\sigma_s^\prime(R))}^\text{Chemotaxis}\right),
\end{aligned}
$$
and
$$
\begin{aligned}
L_{2}&(l, R, R_0)\\=&\overbrace{\sigma_s(R)}^\text{Nutrient at boundary}-\overbrace{\frac{2}{R}\frac{\sigma_s^\prime(R)-\frac{R_0}{R}\sigma_s^\prime(R_0) }{1-\left(\frac{R_0}{R}\right)^2}}^\text{Apoptosis}\\
&+\underbrace{Q_l^\prime(R)-\frac{l}{R}\overbrace{\frac{1-\left(\frac{R_0}{R}\right)^{2l}}{ 1+
\left(\frac{R_{0}}{R}\right)^{2 l}}}^{\text{Necrosis (I)}}Q_l(R)
-
\overbrace{\frac{1}{\frac{1}{2}\left(\left(\frac{R}{R_0}\right)^{l+1}+\left(\frac{R_0}{R}\right)^{l-1}\right)}}^\text{Necrosis (II)}Q_l^\prime(R_0)}_{\Lambda\text{: Necrosis and Nutrient perturbation}}.
\end{aligned}
$$

\begin{remark}
    For $l=0$, $F(\mathcal{P})=\mathcal{P}(\mathcal{A}-\sigma_s(R)-Q_0^\prime(R))=0$, which implies $\mathcal{P}_0=0$.
\end{remark}

Note that
\begin{align}
\lim_{R_0\rightarrow 0}\mathcal{P}_l
&=
\frac{
\overbrace{\mathcal{G}^{-1}\frac{l^3-l}{R^3}}^\text{Surface tension}
-\overbrace{\chi_\sigma\lim_{R_0\rightarrow 0}(Q_l(R)+\sigma_s^\prime(R))
\frac{l}{R}}^\text{Chemotaxis}}
{\displaystyle\lim_{R_0\rightarrow 0}\left(
\underbrace{\sigma_s(R)}_\text{Nutrient at boundary}
-\underbrace{\frac{2}{R}\sigma^{\prime}_s(R)}_\text{Apoptosis}
+\underbrace{Q^\prime_l(R)-\frac{l}{R}Q_l(R)}_\text{Nutrient perturbation}\right)
},
\\
\lim_{\substack{\beta\rightarrow\infty\\R_0\rightarrow 0}}\mathcal{P}_l
&=
\frac{\overbrace{\mathcal{G}^{-1}\frac{l^3-l}{R^3}}^\text{Surface tension}}{\underbrace{1}_\text{Nutrient at boundary}-\underbrace{\frac{2}{R}\frac{I_1(R)}{I_0(R)}}_\text{Apoptosis}-
\underbrace{\frac{I_1(R)}{I_0(R)}\left(\frac{I_l^{\prime}(R)}{I_l(R)}-\frac{l}{R}  \right)}_\text{Nutrient perturbation}},\label{Pllim}
\end{align}
where \eqref{Pllim} recovers the result in \cite{friedman2001existence}. We found that it is independent of $\chi_\sigma$ since $\displaystyle\lim_{\beta\rightarrow\infty,R_0\rightarrow 0}\left(Q_l(R)+\sigma^\prime(R)\right)=-\frac{I_1(R)}{I_0(R)}+\frac{I_1(R)}{I_0(R)}=0$.
% \newpage
\begin{remark}
\begin{enumerate}
    \item It is clear that Necrosis (I) is in (0,1) and increasing in $l$. Necrosis (II) is also in (0,1) from the arithmetic-geometric mean inequality. To see the monotonicity of Necrosis (II) it is sufficient to check $g^\prime(l)>0$, where $g(l)=a^{l+1}+a^{1-l},a=R/R_0>1$. It is true since $g^{\prime}(l)=a^{1-l}\ln a (a^{2l}-1)>0$.
    \item 
    The monotonicity of $L_{2}(l, R, R_0)$ is summarized in the following Lemma $\ref{lemma:L2}$. Here we only consider $R_0$ in a neighborhood of $R$ with the assumption 
$\sigma_s(R)-\mathcal{A}>0$, where $\mathcal{A}=\frac{2}{R}\frac{\sigma_s^\prime(R)-\frac{R_0}{R}\sigma_s^\prime(R_0)}{1-\left(\frac{R_0}{R}\right)^2}$, which assumes the nutrient level at the tumor boundary is greater than the apoptosis rate.
%   Recall that $Q_l(r)=-(\sigma_s^{\prime\prime}(R)+\beta\sigma_s^\prime(R))
% G_\beta(r)$ and by Lemma \ref{lemma:GG'}, \ref{lemma:G'}, $L_2$ is a sequence decreasing in $l$ under the condition that $\sigma_s^{\prime\prime}(R)+\beta\sigma_s^\prime(R)>0$, which generally holds true as $\underline \sigma$ is small and both $R-R_0$, $\beta$ are large. When $\sigma_s^{\prime\prime}(R)+\beta\sigma_s^\prime(R)<0$, $L_2$ becomes negative and thus increasing in $l$; the monotonicity still holds true.
    \item The only term that may significantly change the monotonicity is the chemotaxis in $L_1$, as we verified numerically in Fig. \ref{fig:mono} when $\chi_\sigma$ is increasing from 1 to 100. We remark that this effect is enhanced when we take a smaller value of $\beta$.
\end{enumerate}
\end{remark}

% It is clear that $L_{1}$ is increasing with respect to $l$ while 
% {\red
\begin{lemma}\label{lemma:L2}
For given $R>0$, $R_0$ is in a neighborhood of $R$, namely, $R-\varepsilon$ for a small $\varepsilon$, $L_2(l,R,R_0)>0$ is increasing with respect to $l$ under the assumption 
$\sigma_s(R)-\mathcal{A}>0$, where $\mathcal{A}=\frac{2}{R}\frac{\sigma_s^\prime(R)-\frac{R_0}{R}\sigma_s^\prime(R_0)}{1-\left(\frac{R_0}{R}\right)^2}$.
\end{lemma}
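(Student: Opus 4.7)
The plan is to decompose
\[
L_2(l,R,R_0) = \bigl(\sigma_s(R) - \mathcal{A}\bigr) + \Lambda(l,R,R_0),
\]
where $\Lambda$ is the sum of the three $l$-dependent terms in $L_2$, namely $Q_l'(R)$, the Necrosis(I)-weighted $(l/R)Q_l(R)$ term, and the Necrosis(II)-weighted $Q_l'(R_0)$ term. Since $\sigma_s(R) - \mathcal{A}$ is $l$-independent and positive by hypothesis, monotonicity of $L_2$ in $l$ is equivalent to that of $\Lambda$, and positivity reduces to a size comparison. The approach is therefore a systematic asymptotic expansion of every piece in the small parameter $\varepsilon := R - R_0 > 0$.

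First, binomial expansion of $(R_0/R)^{l\pm 1}$ and $(R_0/R)^{2l}$ yields
\[
\text{Necrosis(I)} = \frac{l\varepsilon}{R} + O(\varepsilon^2), \qquad \text{Necrosis(II)} = 1 - \frac{\varepsilon}{R} - \frac{l^2}{2}\Bigl(\frac{\varepsilon}{R}\Bigr)^{\!2} + O(\varepsilon^3).
\]
For the $Q_l$-terms I would use the identity $Q_l(r) = c\,G_\beta(r)$ with $c := -(\sigma_s''(R) + \beta\sigma_s'(R))$, and Taylor-expand the numerator $K_l(R_0)I_l(R) - I_l(R_0)K_l(R)$ and denominator of $G_\beta$ at $R_0 = R$, using the Wronskian identity $I_l'(r)K_l(r) - K_l'(r)I_l(r) = 1/r$ and the modified Bessel ODE satisfied by $I_l, K_l$. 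This produces $G_\beta(R) = \varepsilon + O(\varepsilon^2)$, $G_\beta'(R) = 1 - \beta G_\beta(R)$ (from the boundary condition in \eqref{Ql}), and $G_\beta'(R_0) = 1 + (1/R - \beta)\varepsilon + O(\varepsilon^2)$, with the leading coefficients independent of $l$ and the first $l$-dependent correction entering at $O(\varepsilon^2)$ through the $l^2/r^2$ term in the ODE. A parallel expansion of $\mathcal{A}$, using $\sigma_s'' + \sigma_s'/r - \sigma_s = 0$ at $r=R$, gives $\sigma_s(R) - \mathcal{A} = \tfrac{1}{2}\sigma_s'(R)\,\varepsilon + O(\varepsilon^2)$, strictly positive by the Robin condition $\sigma_s'(R) = \beta(1-\sigma_s(R)) > 0$.

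Assembling these expansions, the $O(1)$ and $O(\varepsilon)$ contributions to $\Lambda$ cancel between $Q_l'(R)$ and $\text{Necrosis(II)}\cdot Q_l'(R_0)$, while the explicit $l^2/R^2$ contributions at order $\varepsilon^2$ from $\text{Necrosis(II)}\cdot Q_l'(R_0)$ cancel against the one from $(l/R)\text{Necrosis(I)}\cdot Q_l(R)$, leaving $\Lambda = c\varepsilon^2/2 + O(\varepsilon^3)$. Since $\sigma_s(R) - \mathcal{A}$ is of order $\varepsilon$ and strictly positive, it dominates the $O(\varepsilon^2)$ size of $\Lambda$, yielding $L_2 > 0$ for small $\varepsilon$. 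For strict monotonicity in $l$, I would then push the expansion to $O(\varepsilon^3)$ to isolate the first genuinely $l$-dependent coefficient in $\Lambda$ and verify its sign, expecting it to give a positive contribution that grows with $l$.

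The main obstacle is precisely this: the natural asymptotic cancellations mask the $l$-dependence of $\Lambda$ until order $\varepsilon^3$, making the direct Taylor-expansion route to monotonicity computationally delicate and sensitive to which derivatives of the Bessel functions one tracks. A cleaner backup strategy would be to combine the small-$\varepsilon$ expansions of the Necrosis factors with the explicit $l$-monotonicity of $G_\beta(R;l)$, $G_\beta'(R;l)$, and $G_\beta'(R_0;l)$ already established in Lemmas \ref{lemma:G} and \ref{lemma:G'}, tracking $\Lambda(l+1) - \Lambda(l)$ term by term to bypass the higher-order bookkeeping while still relying on $R_0$ being close to $R$ only to control the sign of the Necrosis contributions.
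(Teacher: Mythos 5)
Your decomposition $L_2 = (\sigma_s(R)-\mathcal{A}) + \Lambda$ is exactly the paper's (the paper writes $\Lambda = -(\sigma_s''(R)+\beta\sigma_s'(R))\,f(l)$ with $f(l) = G_\beta'(R) - \tfrac{l}{R}\,\mathrm{Nec(I)}\,G_\beta(R) - \mathrm{Nec(II)}\,G_\beta'(R_0)$), and your positivity argument is the same one-line dominance argument the paper uses: $\sigma_s(R)-\mathcal{A} > 0$ persists for small $\varepsilon$ because $\Lambda$ vanishes as $\varepsilon\to 0$. Your sharper observation that $\Lambda = O(\varepsilon^2)$ while $\sigma_s(R)-\mathcal{A}=O(\varepsilon)$ is correct and makes the positivity step cleaner.

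The genuine divergence---and a genuine gap---is in the monotonicity in $l$. The paper's proof keeps $G_\beta'(R)-G_\beta'(R_0)$, drops the necrosis-weighted corrections as $O(\varepsilon)$, and appeals to Lemma~\ref{lemma:G'} (endpoint $l$-monotonicity of $G_\beta'$) to conclude. Your expansion, pushed one order further, shows that this truncation is not legitimate as stated: $G_\beta'(R)-G_\beta'(R_0) = -\varepsilon/R + O(\varepsilon^2)$ is itself $O(\varepsilon)$ and its first $l$-dependence $+\tfrac{l^2}{2R^2}\varepsilon^2$ appears only at $O(\varepsilon^2)$, while the ``discarded'' terms carry $-\tfrac{l^2}{R^2}\varepsilon^2$ (from $\tfrac{l}{R}\mathrm{Nec(I)}\,G_\beta(R)$) and $+\tfrac{l^2}{2R^2}\varepsilon^2$ (from $(1-\mathrm{Nec(II)})\,G_\beta'(R_0)$), which cancel the leading $l$-dependence exactly, leaving $f(l)=\tfrac{\varepsilon^2}{2}+O(\varepsilon^3)$ with all $l$-dependence deferred to $O(\varepsilon^3)$. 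So neither the leading truncation nor Lemma~\ref{lemma:G'} on its own controls the sign of $f(l+1)-f(l)$, and your proposed ``backup'' strategy is in fact the paper's route and inherits the same difficulty. To close the lemma you would need either the $O(\varepsilon^3)$ coefficient of $f$ explicitly (a definite but heavy Bessel computation), together with the sign of the prefactor $\sigma_s''(R)+\beta\sigma_s'(R)$ (which dictates whether monotone $f$ translates to increasing or decreasing $L_2$ and is also left implicit), or else a structural argument---for instance a maximum-principle comparison applied directly to $f(l+1)-f(l)$ that treats the three terms of $f$ as a single quantity rather than expanding them order by order.
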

\begin{proof}
Recall
$$
L_{2}(l,R, R_0)= \sigma_s(R)-\frac{2}{R}\frac{\sigma_s^\prime(R)-\frac{R_0}{R}\sigma_s^\prime(R_0) }{1-\left(\frac{R_0}{R}\right)^2}
-(\sigma_s^{\prime\prime}(R)+\beta\sigma_s^\prime(R))
f(l).
$$
where
$$
f(l)=G_\beta^\prime(R)-\frac{l}{R}\frac{1-\left(\frac{R_0}{R}\right)^{2l}}{ 1+
\left(\frac{R_{0}}{R}\right)^{2 l}}G_\beta(R)
-
\frac{1}{\frac{1}{2}\left(\left(\frac{R}{R_0}\right)^{l+1}+\left(\frac{R_0}{R}\right)^{l-1}\right)}G_\beta^\prime(R_0) .
$$
Since $R_0=R-\varepsilon$, we have
$$
f(l)=
G_\beta^\prime(R)-G_\beta^\prime(R_0)
% +G_\beta(R) \varepsilon
+\mathcal{O}\left(\varepsilon\right).
$$
From Lemma \ref{lemma:G'} we have $f(l)$, thus also for $L_{2}(l,R,R_0)$, increases with respect to $l$. 
% and $F(r) \leq 0$ for all $\rho \leq r \leq R$. 
% Moreover, since $\frac{d F}{d t}(\rho)<0$, we have $\frac{d G_\beta(\rho)}{d n}<0 .$
Next, we prove that $L_{2}(l,R,R_0)>0$
when $\varepsilon$ is small and expand $L_{2}(l, R, R_0)$ in terms of $\varepsilon$
$$
L_{2}(l, R, R_0) =\sigma_s(R)-\mathcal{A}
+\mathcal{O}\left(\varepsilon\right).
$$
% $$
% \begin{aligned}
% L_{2}(l, R, R_0) &=\left(\frac{1}{2} M_{s}^{\prime}(R)+G_\beta(R)\right) \varepsilon+\mathcal{O}\left(\varepsilon^{2}\right) \\
% &=\left(\frac{1}{2} M_{s}^{\prime}(R)+G_\beta(R_0)+\varepsilon G_\beta^{\prime}(\rho)\right) \varepsilon+\mathcal{O}\left(\varepsilon^{2}\right) \\
% &=\left(\frac{1}{2} M_{s}^{\prime}(R)-M_{s}^{\prime}(\rho)\right) \varepsilon+\mathcal{O}\left(\varepsilon^{2}\right) \\
% &=-\frac{1}{2} M_{s}^{\prime}(R) \varepsilon+\mathcal{O}\left(\varepsilon^{2}\right)
% \end{aligned}
% $$
Since $\sigma_s(R)-\mathcal{A}>0$ by assumption, we have $L_{2}(l, R_0, R)>0$ for a small $\varepsilon$.
\end{proof}
% }

\begin{figure}
    \centering
    \includegraphics[width=0.8\textwidth]{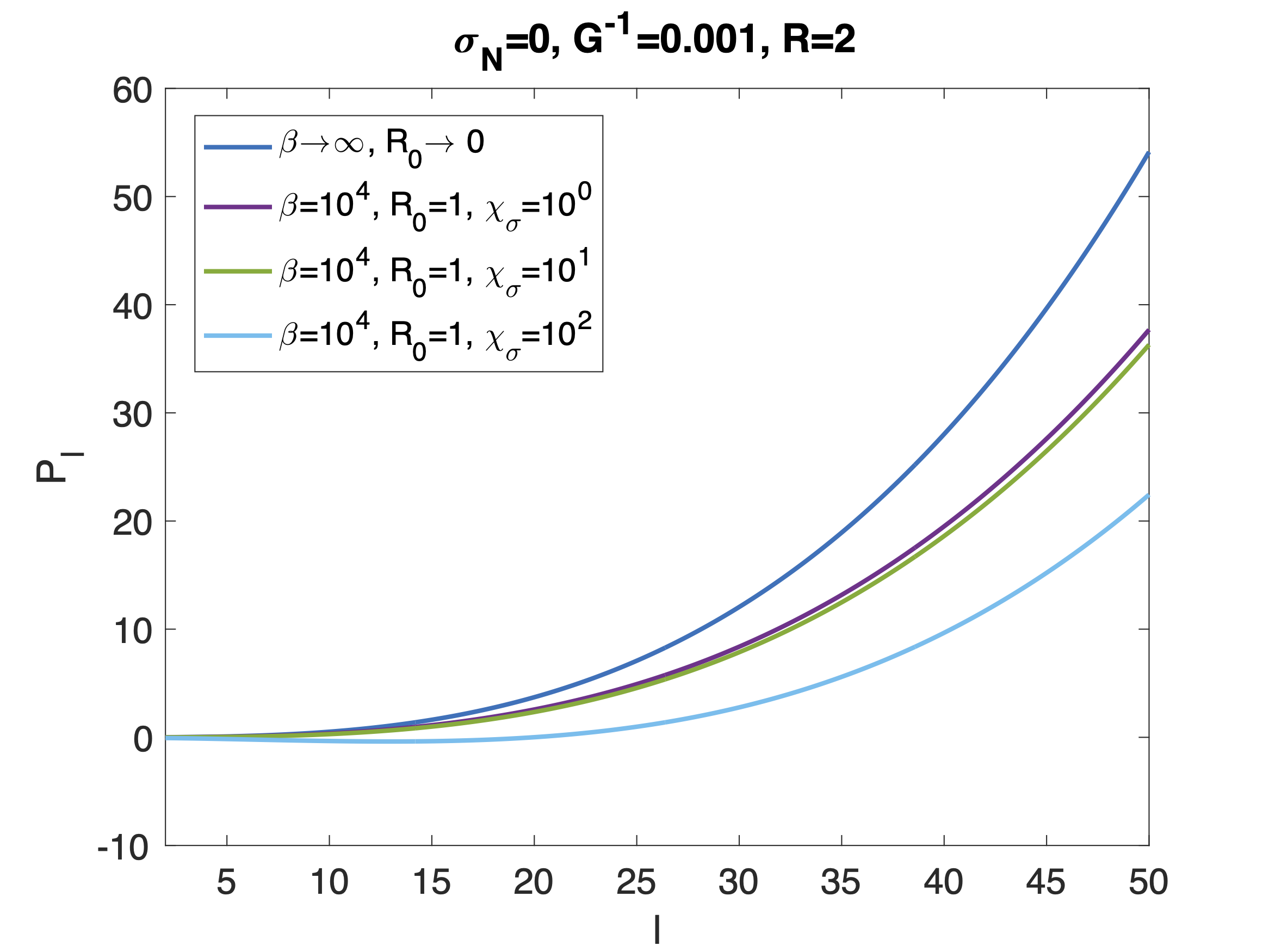}
    \caption{Plot of the effect of chemotaxis on monotonicity. The top curve is the limiting case and other curves are the relaxed cases with $\beta=10^4,R_0=1$ as the vascularized and necrotic case we considered in this paper. The monotonicity is lost as $\chi_\sigma$ is increasing.}
    \label{fig:mono}
\end{figure}

Then, we prove that $\mathcal{P}_{l}$ in \eqref{bifurcation} is a bifurcation point by verifying the following Crandall-Rabinowitz theorem.
% ${ }^{2}$

% Theorem III.2. 
\begin{theorem}
Let $X, Y$ be real Banach spaces and $F(x, \mu)$ a $C^{p}$ map, $p \geq 3$, of a neighborhood $\left(0, \mu_{0}\right)$ in $X \times \mathbb{R}$ into Y. Suppose
\begin{enumerate}
  \item $F(0, \mu)=0$ for all $\mu$ in a neighborhood of $\mu_{0}$,

  \item $\operatorname{Ker}F_{x}\left(0, \mu_{0}\right)$ is one dimensional space, spanned by $x_{0}$,

  \item $\operatorname{Im}F_{x}\left(0, \mu_{0}\right)=Y_{1}$ has codimension 1 ,

  \item $F_{\mu x}\left(0, \mu_{0}\right) \notin Y_{1}$.
\end{enumerate}
Then, $\left(0, \mu_{0}\right)$ is a bifurcation point of the equation $F(x, \mu)=0$ in the following sense: In a neighborhood of $\left(0, \mu_{0}\right)$, the set of solutions of $F(x, \mu)=0$ consists of two $\mathrm{C}^{p-2}$ smooth curves $\mathcal{C}_{1}$ and $\mathcal{C}_{2}$ which intersect only at the point $\left(0, \mu_{0}\right) .$ Moreover, $\mathcal{C}_{1}$ is the curve $\left(0, \mu_{0}\right)$ and $\mathcal{C}_{2}$ can be parameterized as follows:

$\mathcal{C}_{2}:(x(\varepsilon), \mu(\varepsilon)),|\varepsilon|$ small, $(x(0), \mu(0))=\left(0, \mu_{0}\right), x^{\prime}(0)=x_{0}$. 
\end{theorem}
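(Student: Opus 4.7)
The plan is to reduce the abstract bifurcation equation $F(x,\mu)=0$ to a finite-dimensional scalar equation via a splitting of both $X$ and $Y$, and then apply the implicit function theorem twice. First, I would fix continuous projections realizing the splittings: since $\mathrm{Ker}\,F_x(0,\mu_0)=\mathrm{span}(x_0)$ is one-dimensional, Hahn--Banach supplies a functional $\ell\in X^*$ with $\ell(x_0)=1$, giving a projection $P\colon X\to\mathrm{span}(x_0)$ along a closed complement $Z:=\mathrm{Ker}\,P$, so $X=\mathrm{span}(x_0)\oplus Z$. Since $Y_1=\mathrm{Im}\,F_x(0,\mu_0)$ is closed of codimension one, pick $y_1\notin Y_1$ and write $Y=Y_1\oplus\mathrm{span}(y_1)$, with projection $Q\colon Y\to Y_1$. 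The key observation is that $F_x(0,\mu_0)|_Z\colon Z\to Y_1$ is a continuous linear bijection (injective by hypothesis (2), surjective by (3)), hence a topological isomorphism by the open mapping theorem.

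\textbf{Rescaling to remove the trivial branch.} Because $F(0,\mu)\equiv 0$ near $\mu_0$, the identity $F(x,\mu)=\bigl(\int_0^1 F_x(tx,\mu)\,dt\bigr)x$ holds, so defining $L(x,\mu):=\int_0^1 F_x(tx,\mu)\,dt$, I write a generic point of $X$ as $x=\varepsilon(x_0+z)$ with $z\in Z$, $\varepsilon\in\mathbb{R}$ and introduce
\begin{equation*}
\widetilde F(\varepsilon,z,\mu) \;:=\; L\bigl(\varepsilon(x_0+z),\mu\bigr)(x_0+z).
\end{equation*}
Since $F\in C^p$, the map $\widetilde F\colon\mathbb{R}\times Z\times\mathbb{R}\to Y$ is $C^{p-1}$ (one derivative is absorbed by the integral), and for $\varepsilon\neq 0$ we have $\widetilde F(\varepsilon,z,\mu)=\varepsilon^{-1}F(\varepsilon(x_0+z),\mu)$, so nontrivial zeros of $F$ correspond exactly to zeros of $\widetilde F$ with $\varepsilon\neq 0$. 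At the base point I compute $\widetilde F(0,0,\mu_0)=F_x(0,\mu_0)x_0=0$, and the partials
\begin{equation*}
\widetilde F_z(0,0,\mu_0)\,h \;=\; F_x(0,\mu_0)h \quad (h\in Z), \qquad \widetilde F_\mu(0,0,\mu_0) \;=\; F_{x\mu}(0,\mu_0)x_0,
\end{equation*}
so $\widetilde F_z(0,0,\mu_0)\colon Z\to Y_1$ is an isomorphism and, by hypothesis (4), $\widetilde F_\mu(0,0,\mu_0)\notin Y_1$.

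\textbf{Two IFT applications.} Split $\widetilde F=0$ as the pair $Q\widetilde F=0$, $(I-Q)\widetilde F=0$. The first component is a $C^{p-1}$ map into $Y_1$ whose $z$-derivative at $(0,0,\mu_0)$ is the isomorphism $F_x(0,\mu_0)|_Z$, so the implicit function theorem produces a unique $C^{p-1}$ solution $z=z(\varepsilon,\mu)$ in a neighborhood of $(0,\mu_0)$, with $z(0,\mu_0)=0$. Substituting yields the scalar (one-dimensional range) equation $\Phi(\varepsilon,\mu):=(I-Q)\widetilde F(\varepsilon,z(\varepsilon,\mu),\mu)=0$. Since $(I-Q)$ kills the range of $\widetilde F_z$, the chain rule gives $\Phi_\mu(0,\mu_0)=(I-Q)F_{x\mu}(0,\mu_0)x_0\neq 0$ by hypothesis (4). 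A second IFT application then furnishes $\mu=\mu(\varepsilon)$ with $\mu(0)=\mu_0$, smooth of class at least $C^{p-2}$. Setting $x(\varepsilon):=\varepsilon\bigl(x_0+z(\varepsilon,\mu(\varepsilon))\bigr)$ gives the nontrivial branch $\mathcal{C}_2$, and $x'(0)=x_0$ follows from $z(0,\mu_0)=0$.

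\textbf{Exhaustiveness and the main obstacle.} The trivial curve $\mathcal{C}_1=\{(0,\mu)\}$ comes directly from hypothesis (1). The delicate step, which I expect to be the main obstacle, is verifying that $\mathcal{C}_1\cup\mathcal{C}_2$ exhausts all solutions in a neighborhood of $(0,\mu_0)$. The uniqueness clause of the implicit function theorem guarantees uniqueness only for the rescaled equation $\widetilde F=0$, so I must argue that any solution $(x,\mu)$ of $F(x,\mu)=0$ near $(0,\mu_0)$ with $x\neq 0$ can be uniquely represented as $x=\varepsilon(x_0+z)$ with $z\in Z$ and small $\varepsilon$; this requires checking that the decomposition $x=\ell(x)x_0+(x-\ell(x)x_0)$ yields a small $\varepsilon=\ell(x)$ and a correspondingly small $z$, and that the unique $z(\varepsilon,\mu)$ and $\mu(\varepsilon)$ furnished above must then coincide with the components of this decomposition. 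A secondary subtlety is tracking regularity through the rescaling: one derivative is lost in passing to $L(x,\mu)$, so the two successive implicit-function steps deliver $C^{p-1}$ functions $z$ and $\mu$, and the stated $C^{p-2}$ conclusion is a safe lower bound.
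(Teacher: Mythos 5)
A point of context first: the paper does not prove this statement at all --- it is the classical Crandall--Rabinowitz theorem, quoted as a known tool, and the ``Verification'' that follows it in the paper only checks hypotheses (1)--(4) for the specific map $F(R_1,\mathcal{P})$ of the tumor problem. So your proposal must be judged against the classical proof rather than against anything in the paper. Your construction of the nontrivial branch is essentially the original Crandall--Rabinowitz argument: factor $F(x,\mu)=L(x,\mu)x$ using hypothesis (1), rescale $x=\varepsilon(x_0+z)$, note $\widetilde F=\varepsilon^{-1}F$ for $\varepsilon\neq0$, and apply the implicit function theorem at $(0,0,\mu_0)$; whether one solves for $(z,\mu)$ jointly (as Crandall--Rabinowitz do, using that $(h,\tau)\mapsto F_x(0,\mu_0)h+\tau F_{x\mu}(0,\mu_0)x_0$ is an isomorphism of $Z\times\mathbb{R}$ onto $Y$) or in two steps through $Q$ and $I-Q$ as you do is immaterial. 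Your derivative computations at the base point, the conclusion $x'(0)=x_0$, and the regularity bookkeeping ($C^{p-1}$ for $z$ and $\mu$, hence at least the stated $C^{p-2}$) are all correct.

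The genuine gap is the exhaustiveness claim --- that $\mathcal{C}_1\cup\mathcal{C}_2$ contains \emph{all} solutions near $(0,\mu_0)$ --- which you correctly flag as the main obstacle but do not close, and whose suggested repair is not valid as stated. The IFT uniqueness for $\widetilde F$ covers only solutions of the form $x=\varepsilon(x_0+z)$ with \emph{both} $\varepsilon$ and $z$ small. For an arbitrary small solution $x\neq0$ you propose $\varepsilon=\ell(x)$ and $z=\bigl(x-\ell(x)x_0\bigr)/\ell(x)$, but smallness of $\|x\|$ gives no control on $\|z\|$: a priori the $Z$-component of $x$ could be comparable to $\|x\|$ while $\ell(x)$ is much smaller, and solutions with $\ell(x)=0$, $x\neq0$ are not representable at all. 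What is needed is an a priori estimate that for solutions $(x,\mu)\to(0,\mu_0)$ the $Z$-component is $o(|\ell(x)|)$. One way: write $x=sx_0+w$ with $s=\ell(x)$, $w\in Z$; Taylor expansion gives $0=F(x,\mu)=F_x(0,\mu_0)w+O(|\mu-\mu_0|\,\|x\|)+o(\|x\|)$, and since $F_x(0,\mu_0)|_Z\colon Z\to Y_1$ is bounded below, $\|w\|=o(\|x\|)$, hence $\|x\|=O(|s|)$ and $\|w\|/|s|\to0$, after which your rescaled uniqueness does apply. Alternatively, perform the unrescaled Lyapunov--Schmidt reduction first (solve $QF(sx_0+w,\mu)=0$ for $w=W(s,\mu)$; this captures every small solution because $|s|\le\|\ell\|\,\|x\|$ and $\|w\|\le C\|x\|$ are automatically small), then factor the scalar bifurcation function as $g(s,\mu)=s\,h(s,\mu)$, so the solution set splits into $\{s=0\}$ and $\{h=0\}$ and exhaustiveness is automatic. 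Without one of these arguments the two-curve description in the conclusion remains unproven.
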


% Verification.
% \begin{verification} 
\noindent \textbf{Verification.}
% {\red Notice that we have computed explicitly the first order Frech\'et derivative, which is clearly continuous. The argument actually shows that }
Notice that we have computed explicitly the first order Frech\'et derivative, which is clearly continuous. The argument actually shows that 
%Since 
the differentiablility is eventually reduced to the regularity of the corresponding PDEs as shown in Sect. \ref{sect:Jus_exp}, and $\frac{\partial F}{\partial x}$ $\left(\text{or }\frac{\partial F}{\partial \mu}\right)$ is obtained by solving a linearized problem about $(x,\mu)$ with respect to $x$ (or $\mu$).
% {\red These explicit formulas are utilized to establish conditions (1)--(4) in the Crandall-Rabinowitz theorem. If we are just interested in differentiability, we can repeat the process } 
These explicit formulas are utilized to establish conditions (1)--(4) in the Crandall-Rabinowitz theorem. If we are just interested in differentiability, we can repeat the process
by using Schauder estimates, and we can actually obtain differentiability of $F(x, \mu)$ to any order, thus $F(x,\mu)$ is $C^{p}$. 
% {\red The structure of our PDE system guarantees that $F$ maps even $2\pi$-periodic functions to even $2\pi$-periodic functions,
% and then the regularity implies that $F$ maps $X\times \mathbb R$ into $Y$.}
The structure of our PDE system guarantees that $F$ maps even $2\pi$-periodic functions to even $2\pi$-periodic functions,
and then the regularity implies that $F$ maps $X\times \mathbb R$ into $Y$.

Next, we choose the Banach spaces $X=X_{1}^{3+\alpha}$, $Y=X_{1}^{\alpha}, x=R_{1}$, and $\mu=\mathcal{P}$, then have
$\left[F_{R_{1}}(0, \mathcal{P})\right] \cos (l \theta)=\left(L_{1}(l, R, R_0)-\mathcal{P} L_{2}(l, R, R_0)\right) \cos (l \theta) .$
Thus, the kernel space satisfies
$$
\operatorname{ker}\left[F_{R_{1}}(0, \mathcal{P})\right]=\operatorname{span}\{\cos (l \theta)\} \quad \text { if } \mathcal{P}=\mathcal{P}_{l}
$$
and
$$
\operatorname{ker}\left[F_{R_{1}}(0, \mathcal{P})\right]=0 \quad \text { if } \mathcal{P} \neq \mathcal{P}_{1}, \mathcal{P}_{2}, \ldots ,
$$
which implies that $\operatorname{dim}\left(\operatorname{ker}\left[F_{R_{1}}(0, \mathcal{P})\right]\right)=1$. Moreover, since that $\operatorname{Im}\left[F_{R_{1}}\left(0, \mathcal{P}_{l}\right)\right] \oplus\{\cos (l \theta)\}$ is the whole space, we have $\operatorname{codim}\left(\operatorname{Im}\left[F_{R_{1}}\left(0, \mathcal{P}_{l}\right)\right]\right)=1$. Finally, by differentiating with respect to $\mathcal{P}$, we obtain
$$\left.\left[F_{R_{1} \mathcal{P}}(0, \mathcal{P})\right] \cos (l \theta)=-L_{2}(l, R, R_0)\right) \cos (l \theta) \notin \operatorname{Im}\left[F_{R_{1}}\left(0, \mathcal{P}_{l}\right)\right] .$$
Thus, all the assumptions in the Crandall-Rabinowitz theorem are satisfied.
% \end{verification}

\section*{Acknowledgments}
We thank the anonymous reviewer for providing detailed comments and suggestions that helped us to improve the paper. ML acknowledges National Institutes of Health for partial support through grant nos. 1U54CA217378-01A1 for a National Center in Cancer Systems Biology at UC Irvine, the support from DMS-1763272 and the Simons Foundation (594598QN) for an NSF-Simons Center for Multiscale Cell Fate Research and the support from NSF grant DMS-1953410. WH is supported by  the National Science Foundation (NSF) grant DMS-2052685. SL acknowledges the support from the NSF, Division of Mathematical Sciences grant DMS-1720420 and ECCS-1307625.

\section*{Compliance with Ethical Standards}
All authors state that there is no conflict of interest.

 \bibliographystyle{elsarticle-num} 
 \bibliography{cas-refs}

%% else use the following coding to input the bibitems directly in the
%% TeX file.

% \begin{thebibliography}{00}

% %% \bibitem{label}
% %% Text of bibliographic item

% \bibitem{}

% \end{thebibliography}
% \end{CJK}
\end{document}